\documentclass{article}
\usepackage{geometry}
\usepackage{amsmath}
\usepackage{latexsym}
\usepackage{amsthm}
\usepackage{amsfonts}
\usepackage{amssymb}
\usepackage{tikz}
\usepackage{tikz-cd}
\usepackage{gensymb}
\usepackage{cancel}
\usepackage{setspace}
\usepackage{mathtools}
\usepackage{framed}
\usepackage{tgbonum}
\usepackage{hyperref}
\usepackage{graphicx}
\usepackage{changepage}
\usetikzlibrary{decorations.pathreplacing,angles,quotes}
\date{}
\author{Adam Afandi}
\title{Stationary Descendents and the Discriminant Modular Form}

\newtheorem{Definition}{Definition}
\newtheorem{Proposition}{Proposition}
\newtheorem{Lemma}{Lemma}
\newtheorem{Theorem}{Theorem}
\newtheorem{Corollary}{Corollary}
\newtheorem{Example}{Example}

\newtheorem{Open Problem}{Open Problem}
\newtheorem{Conjecture}{Conjecture}

\newcommand{\mbar}{\overline{\mathcal{M}}}

\setcounter{tocdepth}{1}

\begin{document}
\maketitle

\begin{abstract}

The generating functions of stationary descendent Gromov-Witten invariants of an elliptic curve are known to be  Fourier expansions of quasimodular forms. When one restricts to the subspace of forms of a fixed weight $k$, there is an abundance of linear relations among these generating functions. This naturally leads one to study the resulting linear matroid, which we refer to as the \emph{descendent matroid of weight $k$}. In the case of weight 12, we use this matroid to compute and organize all of the ways to express the discriminant modular form in terms of stationary descendents. As a consequence, we find a closed-form expression of Ramanujan tau values in terms of Gromov-Witten invariants of an elliptic curve. All computations were aided with the use of Sage, and the classes and functions written in Sage are discussed in the appendix.

\end{abstract}

\tableofcontents

\pagebreak

\section{Introduction}
 
Let $z \in \mathcal{H}$ be a coordinate in the upper half plane. Recall the {\bf{discriminant modular form}} $\Delta$, given by

\begin{equation*}
\Delta(z) := q\prod_{n \geq 1}(1 - q^n)^{24} := \sum_{n \geq 1}\tau(n)q^n
\end{equation*}

\noindent where $q = e^{2 \pi i z}$. The function $\tau: \mathbb{N} \rightarrow \mathbb{Z}$ is often referred to as the {\bf{Ramanujan tau function}}, in honor of Ramanujan's work on $\tau(n)$ \cite{ramanujan1916certain}. What makes $\tau(n)$ an intriguing function are its remarkable arithmetic properties. For example, we have the following properties, once conjectured by Ramanujan himself, and then proven by Mordell \cite{mordell1917mr}  and Deligne \cite{deligne1974conjecture}:

\begin{enumerate}
\item{$(m, n) = 1 \implies \tau(mn) = \tau(m)\tau(n)$} 
\item{$p \ \text{prime}, \ r > 0 \implies \tau(p^{r+1}) = \tau(p)\tau(p^r) - p^{11}\tau(p^{r-1})$}
\item{$p \ \text{prime} \implies |\tau(p)| \leq 2p^{\frac{11}{2}}$}
\end{enumerate}

\noindent One of the longstanding problems concerning the Ramanujan tau function is proving that $\tau(n) \not= 0$ for all $n \in \mathbb{N}$. This problem is often referred to as \emph{Lehmer's conjecture}, as Lehmer was the first to posit its validity \cite{lehmer1947vanishing}. The computational evidence corroborating Lehmer's conjecture is substantial, see for example (\cite{serre2012course}, pg. 98).

In order to gain more insight as to why Lehmer's conjecture might be true, it's worthwhile to find new ways to interpret what the Ramanujan tau function is actually computing. In \cite{niebur1975formula}, Niebur found a closed form expression for $\tau(n)$ in terms of the sum-of-divisors function,

\begin{equation*}
\tau(n) = n^4\sigma(n) - 24\sum_{i = 1}^{n - 1}i^2(35i^2 - 52in + 18n^2)\sigma(i)\sigma(n - i)
\end{equation*}

\noindent In \cite{garvan2018}, Garvan and Schlosser found a combinatorial expression for $\tau(n)$ in terms of vector partition functions. In the same spirit of trying to find different interpretations of $\tau(n)$, in this paper, we systematically compute all of the ways one can express $\tau(n)$ in terms of disconnected Gromov-Witten invariants of an elliptic curve. The motivation of our investigation is to provide more tools and perspectives to attack Lehmer's conjecture, and to gain a better understanding of how Gromov-Witten invariants generate the ring of quasimodular forms.

First, let's recall why such a computation is even possible. The function $\Delta$ is an important example of a {\bf{modular form}}, that is, an element in the ring $\text{M} := \mathbb{Q}[E_4, E_6]$, where $E_k$ is the (non-normalized) Eisenstein series of weight $k \in 2\mathbb{Z}$, defined by 

\begin{equation}\label{eisenstein}
E_k := -\frac{B_k}{2k} + \sum_{n \geq 1}\sigma_{k - 1}(n)q^n
\end{equation}

\noindent where $B_k$ is the $k^{th}$ Bernoulli number, and $\sigma_{k - 1}(n) := \sum_{d | n}d^{k - 1}$. It's a nice exercise to show that

\begin{equation*}
\Delta = 8000E_4^3 - 147E_6^2
\end{equation*}

\noindent In the literature, one often sees $\Delta$ expressed in the normalized Eisenstein series, in which the Eisenstein series has constant term $1$. In this case, the expression for $\Delta$ reads $(E_4^3 - E_6^2)/1728$. In this paper, we choose to use the non-normalized Eisenstein series, as it aligns better with the results that are cited.

We say that the modular form $E_4^iE_6^j \in \text{M}$, has \emph{weight} $4i + 6j$. This weight function induces a homogeneous grading on $\text{M} := \bigoplus M_k$, where $M_k := \{\text{forms of weight $k$}\}$. We can use Equation \ref{eisenstein} to define the Eisenstein series $E_2$. However, $E_2$ is not a modular form, but is rather an example of a {\bf{quasimodular form}}. The ring of quasimodular forms $\text{QM} := \mathbb{Q}[E_2, E_4, E_6]$ was first introduced and studied by Zagier and Kaneko in \cite{kaneko1995generalized}, and the theory was further elaborated in \cite{zagier2008elliptic}. Similar to the case of modular forms, we say a monomial $E_2^aE_4^bE_6^c \in \text{QM}$ has weight $2a + 4b + 6c$, which induces the grading $\text{QM} = \bigoplus \text{QM}_k$, where $\text{QM}_k := \{\text{forms of weight $k$}\}$.

Let $\mbar^{\bullet}_{g, n}(E, d)$ be the moduli stack of possibly disconnected stable maps of degree $d$ from $n$-pointed genus $g$ curves to a smooth elliptic curve $E$. This space has virtual dimension $2g - 2 + n$. It comes equipped with $n$ evaluation maps $\text{ev}_i: \mbar^{\bullet}_{g, n}(E, d) \rightarrow E$. Furthermore, we let $\psi_i := c_1(\mathbb{L}_i) \in H^2(\mbar^{\bullet}_{g, n}(E, d))$, where $\mathbb{L}_i$ is the $i^{th}$ universal cotangent line bundle. We call the $\psi$-classes {\bf{descendent classes}}. The {\bf{disconnected stationary descendent Gromov-Witten invariants}} of $E$ are intersection numbers of the form

\begin{equation*}
\left<\tau_{k_1}\ldots\tau_{k_n}\right>^{\bullet E}_{d} := \int_{\left[.\mbar_{g, n}^{\bullet}(E, d) \right]^{\text{vir}}}\psi_1^{k_1}\text{ev}_1^*([pt.])\ldots\psi_n^{k_n}\text{ev}_n^*([pt.])
\end{equation*}

\noindent The starting point of our investigation begins with a result of Okounkov and Pandharipande:

\begin{Theorem}\label{OP}[\cite{okounkov2006gromov}]
Let $k_1, \ldots, k_n \geq 0$ be integers. Then the generating function

\begin{equation*}
\left<\tau_{k_1}\ldots\tau_{k_n}\right> := \prod_{n \geq 1}(1 - q^n)\sum_{d \geq 0}\left<\tau_{k_1}\ldots\tau_{k_n}\right>^{\bullet E}_d q^d
\end{equation*}

\noindent is a quasimodular form of weight $k = \sum_{i = 1}^n(k_i + 2)$.

\end{Theorem}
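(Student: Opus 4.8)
The plan is to abandon the geometry of $\mbar^\bullet_{g,n}(E,d)$ as quickly as possible and translate the entire generating series into a computation in the fermionic Fock space (the infinite wedge $\Lambda^{\infty/2}V$). The bridge is the GW/Hurwitz correspondence: because the target $E$ is one-dimensional, stationary descendent integrals against the virtual class reduce — via virtual localization and degeneration of the target, together with the completed-cycles formalism — to weighted counts of covers of $E$, which are governed by the representation theory of the symmetric groups $S_d$. The decisive feature of the elliptic target is its genus: the two circles of $E$ force the relevant correlator to be a \emph{trace} over the Fock space weighted by $q^{H}$, where $H$ is the energy operator, rather than a vacuum-to-vacuum matrix element. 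Because we work with the disconnected theory (the $\bullet$), this trace computes the invariants directly, with no exponential/inclusion--exclusion passage between connected and disconnected potentials. Finally, the prefactor $\prod_{n\geq1}(1-q^n)$ in the definition of $\left<\tau_{k_1}\cdots\tau_{k_n}\right>$ is exactly the reciprocal of the empty trace $\mathrm{Tr}\,(q^{H})=\prod_{n\geq1}(1-q^n)^{-1}$, so it normalizes away the degree-zero partition-function contribution and renders the corrected trace a finite $q$-series.

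Next I would encode the insertions as operators. After passing to the infinite wedge, the descendent $\psi_i^{k_i}\,\mathrm{ev}_i^*([pt])$ becomes a specific operator built from the vertex operators
\[
\mathcal{E}_r(z)=\sum_{k\in\mathbb{Z}+\frac12}e^{z\left(k-\frac r2\right)}E_{k-r,\,k}+\frac{\delta_{r,0}}{\varsigma(z)},\qquad \varsigma(z)=e^{z/2}-e^{-z/2},
\]
whose $z$-expansion coefficients produce the point evaluation (handled by the diagonal $r=0$ piece, which shifts the energy) together with the full tower of $\psi$-powers. These operators obey the closed commutation relation $[\mathcal{E}_a(z),\mathcal{E}_b(w)]=\varsigma(aw-bz)\,\mathcal{E}_{a+b}(z+w)$, with the central extension already packaged into the $\frac{\delta_{r,0}}{\varsigma(z)}$ term, and this is precisely what makes the trace computable. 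I would then evaluate
\[
\mathrm{Tr}\!\left(q^{H}\textstyle\prod_{i}\mathcal{E}_0(z_i)\right)
\]
by repeatedly commuting operators past the $q$-weighting — the operator analogue of Wick's theorem on the torus — which collapses the trace into a sum over pairings. Each pairing contributes a \emph{genus-one propagator}, a ratio of Jacobi theta functions in the variables $z_i$ with nome $q$.

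The final and most delicate step is to recognize the outcome as a quasimodular form and to pin down its weight. Expanding the theta-quotient propagators in the $z_i$ produces expansion coefficients that are exactly the Eisenstein series $E_2,E_4,E_6,\dots$; the appearance of the non-modular $E_2$ — traceable to the heat-equation anomaly in the modular transformation of $\theta$ — is precisely why the answer lands in $\mathrm{QM}=\mathbb{Q}[E_2,E_4,E_6]$ rather than in $\mathrm{M}$. The weight bookkeeping is then homogeneity: each factor $\tau_{k_i}$ supplies weight $2$ from the point-class evaluation (equivalently the $\mathcal{E}_0$ energy shift) and weight $k_i$ from the descendent power $\psi^{k_i}$, so the total weight is $\sum_{i=1}^n(k_i+2)$, matching the homogeneity degree one reads off from the theta expansion. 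I expect the main obstacle to be exactly this last identification: controlling the modular transformation law of the propagator and proving that every coefficient arising from the trace genuinely lies in $\mathbb{Q}[E_2,E_4,E_6]$ with the claimed weight, rather than merely being some a priori uncontrolled $q$-series. Everything upstream — the reduction to a Fock-space trace and its commutator-based evaluation — is structurally clean once the operator dictionary is in place.
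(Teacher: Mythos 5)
First, a point of calibration: the paper does not prove Theorem \ref{OP} at all --- it is imported wholesale from Okounkov and Pandharipande \cite{okounkov2006gromov} (and restated as Theorem \ref{opquasimodular}), the only ingredient the paper actually computes with being the closed formula of Theorem \ref{GWH}. So there is no internal proof to compare against; what you have written is an outline of the external argument, and as such it is broadly faithful. The GW/Hurwitz correspondence (proved by degeneration of the target and the completed-cycle formalism) reduces $\left<\tau_{k_1}\ldots\tau_{k_n}\right>^{\bullet E}_d$ to $\frac{1}{\prod_i(k_i+1)!}\sum_{\lambda\vdash d}\prod_i p_{k_i+1}(\lambda)$; summing over $d$ turns this into a charge-zero Fock-space trace $\mathrm{Tr}\bigl(q^{H}\prod_i\mathcal{E}_0(z_i)\bigr)$; the prefactor $(q)_\infty$ is indeed $1/\mathrm{Tr}(q^{H})$; working with the disconnected theory is exactly what lets the trace compute the invariants without a logarithm; and the weight count $\sum_i(k_i+2)$ matches the grading in which $p_{j}$ has weight $j+1$.

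Two caveats, one cosmetic and one substantive. Cosmetically, describing the trace evaluation as ``Wick's theorem on the torus'' collapsing into ``a sum over pairings'' is off: the operators $\mathcal{E}_0(z_i)$ are diagonal and commute, so the trace is literally the sum over partitions $\lambda$ of $q^{|\lambda|}$ times a product of eigenvalues, with nothing to contract; the nontrivial theta-function formula for this $n$-point function is a sum over permutations of determinants of theta derivatives, not over pairings, and its derivation uses the commutation relation you quote applied to the non-diagonal $\mathcal{E}_r$'s. Substantively, you correctly flag the quasimodularity of the Taylor coefficients of that theta expression as ``the main obstacle'' --- but that obstacle \emph{is} the theorem. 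Everything upstream of it is bookkeeping; the statement that the $q$-bracket $(q)_\infty\sum_\lambda q^{|\lambda|}f(\lambda)$ of a shifted symmetric function $f$ of degree $k$ is quasimodular of weight $k$ is precisely the Bloch--Okounkov theorem \cite{bloch2000character}, which is what Okounkov and Pandharipande invoke. Deferring to it is a legitimate way to close the proof, but it should be stated as an explicit invocation rather than left as an acknowledged gap, since as written your argument establishes only that the answer is some controlled $q$-series built from theta functions, not that it lies in $\mathbb{Q}[E_2,E_4,E_6]$ in the claimed weight.
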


\noindent Throughout this paper, we refer to the generating functions $\left<\tau_{k_1}\ldots\tau_{k_n}\right>$ simply as {\bf{stationary descendents of weight $k = \sum (k_i + 2)$}}.

Theorem \ref{OP} tells us that we have a $\mathbb{Q}$-linear map

\begin{equation*}
\text{GW}_k: \bigoplus_{\sum(k_i + 2) = k} \mathbb{Q}\cdot\left<\tau_{k_1}\ldots\tau_{k_n}\right> \rightarrow \text{QM}_k
\end{equation*}

\noindent In order to compute $\Delta$ in terms of stationary descendents of weight 12, we need to understand the linear map $\text{GW}_{12}$. However, it turns out that trying to understand $\text{GW}_k$ for all weights $k$ leads to some interesting combinatorics. The reason is as follows. If we want to understand $\text{GW}_k$, we need to compute its kernel and image. But as $k$ grows larger, the amount of linear-algebraic data that determines $\text{GW}_k$ becomes formidable. Fortunately, there already exists combinatorial machinery that helps one think clearly about this data, namely, the theory of {\bf{matroids}}. 

Matroids are combinatorial objects that abstract the notion of linear independence. They were first introduced by Whitney \cite{whitney1992abstract}, and form a pillar of modern research in combinatorics \cite{oxley2011matroid}. For a nice survey on how matroids have steered mathematical progress up to the present time, we recommend the ICM article of Ardila \cite{ardila2021geometry}.

Each linear map $\text{GW}_k$ is completely determined by the matroid $\mathcal{M}_k$, whose ground set is the set of stationary descendents of weight $k$, and whose bases are the collections of stationary descendents that form a basis of $\text{QM}_k$. We refer to this matroid as the {\bf{descendent matroid of weight $k$}}. In this paper, we initiate the study of the matroids $\mathcal{M}_k$.

\subsection{Results}

\begin{Theorem}
For $k = 2, 4, 6$, $\mathcal{M}_k$ is the uniform matroid $U_{1, 1}, U_{2, 2}, U_{3, 4}$, respectively.
\end{Theorem}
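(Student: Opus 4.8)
The plan is to directly compute the relevant matroids by understanding the map $\text{GW}_k$ in each of the three low weights, where the dimension of $\text{QM}_k$ and the number of stationary descendents are both small enough to handle by hand. The first step is to enumerate the ground set of each matroid, namely all stationary descendents $\left<\tau_{k_1}\ldots\tau_{k_n}\right>$ of the prescribed weight $k = \sum(k_i+2)$. Since each descendent class contributes at least $2$ to the weight (as $k_i \geq 0$), the number of insertions $n$ satisfies $n \leq k/2$, and for each $n$ the admissible tuples $(k_1,\ldots,k_n)$ are the weak compositions of $k-2n$ into $n$ nonnegative parts, up to the reordering symmetry of the bracket. For $k=2$ this yields only $\left<\tau_0\right>$; for $k=4$ the descendents are $\left<\tau_2\right>$ and $\left<\tau_0\tau_0\right>$; and for $k=6$ one gets $\left<\tau_4\right>$, $\left<\tau_2\tau_0\right>$, $\left<\tau_1\tau_1\right>$, and $\left<\tau_0\tau_0\tau_0\right>$, so the ground sets have sizes $1, 2, 4$ respectively, matching the proposed uniform matroids $U_{1,1}$, $U_{2,2}$, $U_{3,4}$.

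Next I would pin down the target spaces. By the standard structure of quasimodular forms, $\dim \text{QM}_k$ equals the number of monomials $E_2^a E_4^b E_6^c$ with $2a+4b+6c = k$, giving $\dim\text{QM}_2 = 1$, $\dim\text{QM}_4 = 2$, and $\dim\text{QM}_6 = 3$. The asserted ranks $1,2,3$ of the three matroids then force $\text{GW}_k$ to be surjective in each case, and the claim $\mathcal{M}_k = U_{r,n}$ (the uniform matroid in which \emph{every} size-$r$ subset of the ground set is a basis) is equivalent to saying that every maximal-size collection of descendents maps to a linearly independent family in $\text{QM}_k$. For $k=2$ and $k=4$, where $n = r$, this reduces to the single assertion that the full ground set is linearly independent (so $\text{GW}_k$ is injective, hence an isomorphism). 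Only for $k=6$, with $n=4 > r=3$, is there genuine matroid content: I must verify that all $\binom{4}{3}=4$ three-element subsets of $\{\left<\tau_4\right>, \left<\tau_2\tau_0\right>, \left<\tau_1\tau_1\right>, \left<\tau_0\tau_0\tau_0\right>\}$ are bases, equivalently that no three of the four images lie in a common $2$-dimensional subspace.

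To carry this out, I would compute the explicit quasimodular form attached to each descendent, for instance by extracting enough low-order $q$-coefficients of the Fourier expansion using the known Gromov-Witten evaluations of Okounkov-Pandharipande (Theorem \ref{OP}) and then expressing each in the basis of monomials in $E_2, E_4, E_6$. Assembling these coordinate vectors into a matrix over $\mathbb{Q}$, the matroid $\mathcal{M}_k$ is by definition the linear (column) matroid of that matrix; the uniformity claim becomes the statement that this matrix has every maximal minor nonzero. For the $4$-column, $3$-row matrix arising in weight $6$, this is the verification that all four $3\times 3$ minors are nonzero.

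The main obstacle is the honest computation of the four weight-$6$ descendent generating functions: one must either invoke closed evaluations (such as the one-point series $\left<\tau_{k}\right>$ built from the Bloch-Okounkov $n$-point function, together with the multiplicativity of brackets under disjoint unions that governs the disconnected theory) or compute sufficiently many $q$-coefficients to determine each quasimodular form uniquely, which requires knowing that the coefficients through order $q^{\lfloor k/12\rfloor + 1}$ suffice to separate forms of weight $k$. Once these four forms are in hand, checking that the four $3\times 3$ minors are nonzero is a routine finite calculation; the conceptual work lies entirely in producing the correct coordinate vectors, and I would cross-check them against the Sage implementation described in the appendix.
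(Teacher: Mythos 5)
Your approach is essentially the paper's own: enumerate the ground sets (sizes $1,2,4$), expand each descendent in the Eisenstein monomial basis of $\text{QM}_k$, assemble the resulting $\dim(\text{QM}_k)\times|E^k|$ matrices, and verify that every maximal minor is nonzero --- the paper does exactly this via the explicit expansions in Equations \ref{weightfour} and \ref{weightsix} and the matrices $A_4$ and $A_6$. One concrete slip: the separation bound you quote, coefficients through order $q^{\lfloor k/12\rfloor+1}$, is the valence-formula bound for \emph{modular} forms and fails for quasimodular forms; for $k=6$ it would give only the coefficients $a_0,a_1$, which cannot separate points of the $3$-dimensional space $\text{QM}_6$. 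The correct statement, implicitly used by the paper's code, is that a quasimodular form of weight $k$ is determined by its first $\dim(\text{QM}_k)$ Fourier coefficients; with that replacement your verification goes through and reproduces the paper's computation.
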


\noindent The weight $k = 8$ is the first instance in which $\mathcal{M}_k$ is not uniform. 

\begin{Theorem}\label{weighttwelve}
Let $\mathcal{M}$ be the rank-$4$ matroid with ground set $E = (abcdefg)$, whose bases are given by $\mathcal{B} = \{\text{$4$-element subsets of $E$}\}\setminus\{abcd\}$. Then $\mathcal{M}_8 \cong \mathcal{M}$.

\end{Theorem}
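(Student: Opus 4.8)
The plan is to prove $\mathcal{M}_8 \cong \mathcal{M}$ by explicitly exhibiting the ground set of weight-$8$ stationary descendents, identifying it with the seven-element set $E = \{a,b,c,d,e,f,g\}$, and then determining exactly which $4$-element subsets are bases of $\text{QM}_8$. First I would enumerate the stationary descendents of weight $8$. By Theorem \ref{OP}, a descendent $\left<\tau_{k_1}\ldots\tau_{k_n}\right>$ has weight $\sum(k_i+2)$, so weight $8$ corresponds to partitions of $8$ into parts each at least $2$, where a part of size $k_i+2$ records an insertion $\tau_{k_i}$. The relevant data is the multiset $\{k_i+2\}$, i.e.\ the partitions of $8$ with all parts $\geq 2$: these are $(8)$, $(6,2)$, $(5,3)$, $(4,4)$, $(4,2,2)$, $(3,3,2)$, $(2,2,2,2)$, giving exactly seven descendents, matching $|E| = 7$.

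Next I would compute the image of each of these seven descendents under $\text{GW}_8$. Since $\dim \text{QM}_8 = 4$ (the monomials $E_2^4, E_2^2E_4, E_2E_6, E_4^2$, or equivalently any basis of $\text{QM}_8$), I expect the map $\text{GW}_8$ to have rank $4$, so that $\mathcal{M}_8$ is a rank-$4$ matroid on seven elements, consistent with the claim. Concretely, I would use the known evaluations of stationary descendents of an elliptic curve as quasimodular forms — via the Okounkov--Pandharipande formalism expressing $\left<\tau_{k_1}\ldots\tau_{k_n}\right>$ through the $n$-point functions and ultimately as explicit polynomials in $E_2, E_4, E_6$ — to write down the $4 \times 7$ coordinate matrix $A$ whose columns are the images of the seven descendents in a fixed basis of $\text{QM}_8$. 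Determining the matroid then reduces to a finite linear-algebra computation: a $4$-subset is a basis of $\mathcal{M}_8$ precisely when the corresponding four columns of $A$ are linearly independent, i.e.\ the $4\times 4$ minor is nonzero.

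The heart of the argument is then showing that exactly one $4$-subset fails to be a basis, and that this dependent quadruple is the one labelled $abcd$. I would compute all $\binom{7}{4} = 35$ maximal minors of $A$ and verify that precisely one vanishes. Equivalently, since the matroid has rank $4$ on $7$ elements with a single non-basis, I only need to exhibit one linear relation among the four descendents comprising $\{a,b,c,d\}$ and check that every other $4$-subset is independent; labelling the descendents so that $a,b,c,d$ are the four lying in this unique circuit then gives the isomorphism $\mathcal{M}_8 \cong \mathcal{M}$ directly from the definition of $\mathcal{M}$ (all $4$-subsets are bases except $abcd$).

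The main obstacle is the computation of the quasimodular-form images themselves: assembling the correct explicit expressions for each of the seven weight-$8$ descendents as polynomials in $E_2, E_4, E_6$ requires care with the Okounkov--Pandharipande normalization conventions and the passage from connected to disconnected invariants, and an error there would corrupt the entire minor computation. This is precisely the kind of bookkeeping best delegated to Sage, and I would rely on the package described in the appendix both to generate the coordinate matrix $A$ and to certify the vanishing of the single minor indexed by $abcd$ together with the nonvanishing of the remaining $34$; the matroid isomorphism is then immediate from this certified rank data.
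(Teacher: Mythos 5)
Your proposal is correct and follows essentially the same route as the paper: the paper writes down the explicit $4\times 7$ matrix $A_8$ of Eisenstein-basis coordinates, exhibits the one dependent quadruple (columns $2,5,6,7$, i.e.\ the four descendents containing a $\tau_0$ insertion, which span only rank $3$), and then invokes the Sage count of $34$ bases against $\binom{7}{4}=35$ to conclude this is the unique non-basis. Your variant of checking all $35$ maximal minors directly is an equivalent computation, so there is nothing substantively different to flag.
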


\noindent For $k \geq 10$, the matroid $\mathcal{M}_k$ has much more combinatorial complexity. However, for $k = 10, 12$, we show that $\mathcal{M}_k$ can be restricted to a uniform matroid in a natural way. For any $k$, let $S^k$ be the {\bf{positive stationary descendents}} of weight $k$ i.e. the quasimodular forms $\left<\tau_{k_1}\ldots\tau_{k_n}\right>$ where $k_i > 0$ for all $i$.    

\begin{Theorem}
Let $S^{k}$ be the positive stationary descendents of weight $k$. Then $\mathcal{M}_{10} | S^{10} = U_{5, 5}$ and $\mathcal{M}_{12} | S^{12} = U_{7, 9}$. 
\end{Theorem}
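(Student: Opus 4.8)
The plan is to establish two separate claims, one for each weight, by computing the rank of the relevant submatroid directly. The key observation is that restricting $\mathcal{M}_k$ to $S^k$ means we only retain the ground-set elements $\langle \tau_{k_1}\ldots\tau_{k_n}\rangle$ with all $k_i > 0$, and we must determine the rank of the $\mathbb{Q}$-span of the corresponding quasimodular forms inside $\text{QM}_k$. Since $\dim \text{QM}_{10} = 5$ and $\dim \text{QM}_{12} = 7$ (the number of monomials $E_2^a E_4^b E_6^c$ of the given weight), the statement $\mathcal{M}_{10}\mid S^{10} = U_{5,5}$ asserts that there are exactly $5$ positive stationary descendents of weight $10$ and they are linearly independent, while $\mathcal{M}_{12}\mid S^{12} = U_{7,9}$ asserts there are exactly $9$ positive stationary descendents of weight $12$, every $7$-element subset of which is a basis.

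First I would enumerate $S^{10}$ and $S^{12}$ explicitly. A positive stationary descendent $\langle \tau_{k_1}\ldots\tau_{k_n}\rangle$ of weight $k$ corresponds to a partition of $k$ into parts $k_i + 2 \geq 3$ (equivalently, a partition of $k$ into parts each at least $3$, since the invariant is symmetric in the $k_i$ and each $k_i \geq 1$). Counting partitions of $10$ into parts $\geq 3$ gives exactly $5$, and partitions of $12$ into parts $\geq 3$ gives exactly $9$, which matches the claimed ground-set sizes $|S^{10}| = 5$ and $|S^{12}| = 9$. This enumeration step is routine but essential for pinning down the ground set.

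Next, for each listed descendent I would compute its $q$-expansion up to enough coefficients to determine it uniquely in $\text{QM}_k$, using the Okounkov–Pandharipande formalism (or the operator/character formulas for these invariants on the elliptic curve) implemented in Sage as described in the appendix. With these expansions in hand, I would assemble, for each weight, the matrix whose rows are the coordinate vectors of the descendents in a fixed monomial basis $\{E_2^a E_4^b E_6^c\}$ of $\text{QM}_k$. For weight $10$ this is a $5 \times 5$ matrix, and showing $\mathcal{M}_{10}\mid S^{10} = U_{5,5}$ amounts to checking that its determinant is nonzero. For weight $12$ this is a $9 \times 7$ matrix of rank $7$, and establishing $\mathcal{M}_{12}\mid S^{12} = U_{7,9}$ requires verifying that \emph{every} $7\times 7$ minor is nonzero, i.e.\ that all $\binom{9}{7} = 36$ maximal subsets of columns (rows) are independent.

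The main obstacle is this last verification for weight $12$: uniformity $U_{7,9}$ is a much stronger condition than full rank, since it demands that no $8$ or $9$ of the descendents exhibit any circuit of size $\leq 7$. Equivalently, I must confirm that none of the $36$ maximal minors vanishes. Unlike the single determinant check in weight $10$, a naive full-rank computation is insufficient here, so I would rely on the Sage matroid routines to extract the exact list of circuits (or nonbases) of the realized linear matroid and verify that there are none of size $\leq 7$ — that is, that the only dependencies arise trivially from having $9$ vectors in a $7$-dimensional space. The conceptual content is that the positive descendents are in ``general position'' in $\text{QM}_{12}$, and the proof reduces entirely to this exact-arithmetic rank/circuit computation, which is why the paper emphasizes that all computations were Sage-aided.
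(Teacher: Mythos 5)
Your proposal is correct and follows essentially the same route as the paper: enumerate the positive descendents (partitions of $k$ into parts $\geq 3$, giving $5$ and $9$ elements respectively), expand them in the Eisenstein basis of $\text{QM}_k$ via exact arithmetic in Sage, and verify uniformity by checking that every maximal subset is independent. The paper carries out the weight-$12$ check by having Sage count the bases of the restricted linear matroid and observing that the count equals $\binom{9}{7} = 36$, which is equivalent to your verification that all $36$ maximal minors are nonzero.
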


\noindent For $k > 12$, the restricted matroid $\mathcal{M}_{k} | S^{k}$ is not uniform. However, we conjecture that it is always of full rank (see Conjecture \ref{fullrank}), and that it always has a certain type of uniform matroid as a minor (see Conjecture \ref{minor}). \\

\noindent The 36 bases of $\mathcal{M}_{12} | S^{12} = U_{7, 9}$ correspond to 36 unique ways of expressing the discriminant modular form $\Delta$ as a $\mathbb{Q}$-linear sum of stationary descendents of weight $12$. The sums are collected in Tables \ref{partone}, \ref{parttwo}, and \ref{partthree} in the Appendix. Translating this into a statement about the Ramanujan tau function, we have the following corollary:

\begin{Corollary}
Let $\tau(d)$ be the Ramanujan tau function evaluated at the integer $d > 0$. For all bases $B \in \mathcal{B}^{12} | S^{12}$, there exists a unique vector $(a_{b})_{b \in B} \in \mathbb{Q}^7$ such that

\begin{equation*}
\tau(d) = \sum_{3d_1^2 - d_1 + 2d_2 = 2d \atop b \in B}(-1)^{d_1}a_b\left<\tau_{b}\right>^{\bullet E}_{d_2}
\end{equation*}
\end{Corollary}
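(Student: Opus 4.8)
The plan is to translate the matroid-basis statement, which is already established by the preceding theorem, into the explicit $q$-series identity involving $\tau(d)$. The starting point is that $\mathcal{M}_{12}\,|\,S^{12} = U_{7,9}$ has rank $7$, so every basis $B$ consists of $7$ positive stationary descendents that form a basis of the $7$-dimensional space $\mathrm{QM}_{12}$; in particular $\Delta \in \mathrm{QM}_{12}$ (since $\Delta$ is a modular, hence quasimodular, form of weight $12$) is a unique $\mathbb{Q}$-linear combination $\Delta = \sum_{b \in B} a_b \langle \tau_b \rangle$ for a unique coefficient vector $(a_b)_{b \in B} \in \mathbb{Q}^7$. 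This existence and uniqueness is exactly the defining property of a basis, so the corollary's claim about $(a_b)$ reduces to unpacking what the equality of generating functions says coefficient-by-coefficient in $q$.

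Next I would extract the $q^d$ coefficient from both sides. On the left, $\Delta = \sum_{d \geq 1} \tau(d) q^d$ gives $\tau(d)$ directly. On the right, each stationary descendent unwinds via its definition in Theorem \ref{OP}:
\begin{equation*}
\langle \tau_b \rangle = \prod_{n \geq 1}(1 - q^n) \sum_{d_2 \geq 0} \langle \tau_b \rangle^{\bullet E}_{d_2} q^{d_2}.
\end{equation*}
The key computation is to expand the Euler product $\prod_{n\geq 1}(1-q^n)$ using Euler's pentagonal number theorem,
\begin{equation*}
\prod_{n \geq 1}(1 - q^n) = \sum_{d_1 \in \mathbb{Z}} (-1)^{d_1} q^{(3d_1^2 - d_1)/2},
\end{equation*}
so that the exponent appearing in each term is $(3d_1^2 - d_1)/2 + d_2$. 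Collecting the $q^d$ coefficient on the right then forces the Diophantine constraint $(3d_1^2 - d_1)/2 + d_2 = d$, i.e. $3d_1^2 - d_1 + 2d_2 = 2d$, with the sign $(-1)^{d_1}$ carried along. Summing over all $b \in B$ and all $(d_1, d_2)$ satisfying this constraint reproduces exactly the asserted formula.

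The main obstacle, such as it is, is bookkeeping rather than conceptual: one must be careful that the pentagonal exponent is $(3d_1^2 - d_1)/2$ and that clearing the denominator $2$ yields precisely $3d_1^2 - d_1 + 2d_2 = 2d$ as written, and that the summation index $d_1$ ranges over all of $\mathbb{Z}$ (not just $d_1 \geq 1$) so that both the $d_1$ and $1 - d_1$ pentagonal numbers are captured. I would verify the sign convention $(-1)^{d_1}$ is consistent with the standard form of the pentagonal number theorem, and note that for each fixed $d$ only finitely many pairs $(d_1, d_2)$ with $d_2 \geq 0$ contribute, so the inner sum is finite and well-defined. Finally, I would emphasize that the uniqueness of $(a_b)_{b\in B}$ is inherited verbatim from the uniqueness of coordinates with respect to a basis, completing the corollary without further matroid input.
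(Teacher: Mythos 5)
Your proposal is correct and follows exactly the paper's own argument: express $\Delta$ uniquely in the basis $B$ of $\mathrm{QM}_{12}$, unwind each stationary descendent into $(q)_\infty$ times its Gromov--Witten generating series, expand $(q)_\infty$ by the Pentagonal Number Theorem, and match degree-$d$ coefficients to obtain the constraint $3d_1^2 - d_1 + 2d_2 = 2d$ with sign $(-1)^{d_1}$. Your additional remarks on the range of $d_1$ and the finiteness of the sum are sound but not needed beyond what the paper records.
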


\noindent By the above corollary, Lehmer's conjecture is equivalent to the following: for $d \in \mathbb{N}$, the degree-$d$ coefficient of all 36 power series collected in Tables \ref{partone}, \ref{parttwo}, and \ref{partthree} never vanishes. Whether this is a step closer to resolving Lehmer's conjecture remains to be seen. However, our investigation provides new relations for Gromov-Witten invariants of an elliptic curve $E$ by using the arithmetic properties of $\tau(n)$. Furthermore, the introduction of the descendent matroid of weight $k$ may be of independent interest.

\subsection{Acknowledgements}

This work was supported by the Cluster of Excellence \emph{Mathematics Münster} and the CRC 1442 \emph{Geometry: Deformations and Rigidity}.``Funded by the Deutsche Forschungsgemeinschaft (DFG, German Research Foundation) - Project-I 427320536 - SFB 1442, as well as under Germany's Excellence Strategy EXC 2044390685587, Mathematics Münster: Dynamics - Geometry - Structure"

\section{Stationary Descendents and Quasimodular Forms}

In this section, we introduce the ring of quasimodular forms $\text{QM}$, and recall the relevant results in \cite{okounkov2006gromov} that allow us to go back and forth between Gromov-Witten invariants of an elliptic curve $E$ and quasimodular forms. \\

\subsection{The Ring of Quasimodular Forms}

Let's begin with the classical theory of modular forms on the full modular group. See \cite{diamond2005first} for a standard reference.

\begin{Definition}
Let $f: \mathcal{H} \rightarrow \mathbb{C}$ be a holomorphic function on the upper half plane $\mathcal{H}$ that is holomorphic at $\infty$. For any even integer $k$, we say $f$ is a {\bf{modular form}} of weight $k$ if, for all $\gamma = \begin{bmatrix} a & b \\ c & d \end{bmatrix} \in \text{SL}_2(\mathbb{Z})$ and for all $z \in \mathcal{H}$,

\begin{equation*}
f(\gamma(z)) = (cz + d)^kf(z)
\end{equation*}

\end{Definition}

\noindent Every modular form $f$ can be expanded into its Fourier coefficients $a_n(f)$,

\begin{equation*}
f(z) = \sum_{n = 0}^\infty a_n(f)e^{2\pi i z n} = \sum_{n = 0}^\infty a_n(f) q^n, \hspace{0.5cm} q := 2\pi i z
\end{equation*} 

\noindent When we write $f(q)$, we are referring to the Fourier expansion of $f$, and will otherwise write $f(z)$. The {\bf{Eisenstein series}} of weight $k$, denoted $E_k$, is defined to be

\begin{equation*}
E_k := -\frac{B_k}{2k} + \sum_{n \geq 1}\sigma_{k - 1}(n)q^n
\end{equation*}

\noindent It turns out that $E_4$ and $E_6$ are modular forms of weight $4$ and $6$, respectively, and that the ring of modular forms $\text{M}$ is freely generated by $E_4$ and $E_6$ i.e. $\text{M} = \mathbb{Q}[E_4, E_6]$. The weight of the modular form $E_4^iE_6^j$ is $4i + 6j$, and this induces a grading on $\text{M} = \bigoplus M_k$, where $M_k := \{\text{forms of weight $k$}\}$. In this paper, we are interested in finding different ways to express the {\bf{discriminant modular form}}.

\begin{Definition}
The {\bf{discriminant modular form}} $\Delta$ is the unique \emph{cusp form} (i.e. modular form with vanishing constant term) of weight $12$ with Fourier coefficient $a_1 = 1$. Specifically,

\begin{equation*}
\Delta := q\prod_{n \geq 1}(1 - q^n)^24 = \sum_{n \geq 1}\tau(n) q^n
\end{equation*}

\end{Definition}

\noindent One would hope that $E_2$ is a `modular form of weight 2', but this is not the case, since

\begin{equation}\label{eisensteinweighttwo}
E_2(\gamma z) = (cz + d)^2E_2(z) + \frac{ic}{4\pi}(cz + d)
\end{equation}

\noindent for all $\gamma \in \text{SL}_2(\mathbb{Z})$ and $z \in \mathcal{H}$ (see \cite{zagier2008elliptic}, pg. 19). To allow $E_2$ to mingle with $\text{M}$ in a consistent way, we relax the condition of modularity.

\begin{Definition}[\cite{royer2012quasimodular}, Definition 3.2]
Let $f: \mathcal{H} \rightarrow \mathbb{C}$ be a holomorphic function. We say $f$ is a {\bf{quasimodular form}} of weight $k$ and depth $s \geq 0$ if there exists holomorphic functions $f_0, \ldots, f_s : \mathcal{H} \rightarrow \mathbb{C}$ with $f_s \not= 0$, such that, for all $\gamma = \begin{bmatrix} a & b \\ c & d \end{bmatrix} \in \text{SL}_2(\mathbb{Z})$ and $z \in \mathcal{H}$,

\begin{equation*}
(cz + d)^{-k}f(\gamma z) = \sum_{j = 0}^s f_j(z)\left( \frac{c}{cz + d} \right)^j
\end{equation*}

\end{Definition}

\noindent The Eisenstein series $E_2$ is a quasimodular form of weight $2$ and depth $1$. Indeed, if we let $f_0 = E_2$ and $f_1 = \frac{i}{4\pi}$, we get back the transformation of Equation \ref{eisensteinweighttwo}. Furthermore, every modular form of weight $k$ is also a quasimodular form of weight $k$ and depth $0$.

The ring of quasimodular forms $\text{QM}$ is freely generated by $E_2, E_4$, and $E_6$, so that $\text{QM} = \mathbb{Q}[E_2, E_4, E_6]$. The monomial $E_2^aE_4^bE_6^c$ is a quasimodular form of weight $2a + 4b + 6c$, and depth $a$. As in the case of $\text{M}$, $\text{QM} = \bigoplus \text{QM}_k$ has a natural grading by weight. Intuitively, one should think of the depth of a quasimodular form $f$ as a measure of how far $f$ is from being modular. Indeed, any quasimodular form $f \in \text{QM}$ of weight $k$ and depth $s$ can be uniquely written as

\begin{equation*}
f = \sum_{j = 0}^s m_jE_2^j
\end{equation*} 

\noindent where $m_j$ is a modular form of weight $k - 2j$. The higher the depth, the more nonmodular terms appear in the sum above.

\subsection{Stationary Descendents of Weight $k$}

Let $g$ and $n$ be nonnegative integers satisfying $2g - 2 + n > 0$. Let $X$ be a smooth projective variety, and let $d \in H_2(X)$ be a curve class. If $(C, p_1, \ldots, p_n)$ is a nodal curve of genus $g$ with $n$ smooth maked points, a morphism $f : C \rightarrow X$ is a \emph{stable map} of degree $d$ if $f_*([C]) = d$, and,  whenever $f$ contracts a rational component $C_0 \cong \mathbb{P}^1$, $C_0$ is incident to at least $3$ `special' points (marked points or nodes). We denote by $\mbar_{g, n}(X, \beta)$ the moduli stack of stable maps to $X$ of degree $d$. We denote by $\mbar_{g, n}^\bullet(X, d)$ the stack of possibly disconnected stable maps to $X$. The space of stable maps (both connected and possibly disconnected) has virtual dimension

\begin{equation*}
\text{vdim}\left(\mbar^\bullet_{g, n}(X, d)\right) = \int_d c_1(T_X) + (\text{dim}(X) - 3)(1 - g) + n
\end{equation*}

\noindent We can compute intersection numbers on $\mbar_{g, n}^\bullet(X, d)$ by integrating cohomology classes against the virtual fundamental class $\left[\mbar_{g, n}^\bullet(E, d)\right]^{\text{vir}} \in H_{e}(\mbar_{g, n}^\bullet(X, d))$, where $e$ is the virtual dimension.

This paper is concerned with the special case when $X = E$ is a smooth elliptic curve. In this case, the homology class $d$ is assumed to be a nonnegative integer. The space $\mbar_{g, n}^{\bullet}(E, d)$ comes equipped with $n$ evaluation maps $\text{ev}_i: \mbar_{g, n}^{\bullet}(E, d) \rightarrow E, \ 1 \leq i \leq n$. The map $\text{ev}_i$ sends a stable map $\left[f:(C, p_1, \ldots, p_n) \rightarrow E\right]$ to $f(p_i) \in E$. Furthermore, we consider the cohomology classes $\psi_i := c_1(\mathbb{L}_i) \in H^2(\mbar_{g, n}^{\bullet}(E, d))$, where $\mathbb{L}_i$ is the line bundle whose fiber over a point $[f:(C, p_1, \ldots, p_n) \rightarrow E]$ is the cotangent space to the $i^{th}$ marked point $p_i \in C$. These $\psi$-classes are called {\bf{descendent classes}}. We are concerned with intersection numbers of the form

\begin{equation*}
\left<\tau_{k_1}\ldots\tau_{k_n}\right>_d^{\bullet E} := \int_{\left[\mbar_{g, n}^\bullet(E, d)\right]^{\text{vir}}}\psi_1^{k_1}\text{ev}_1^*\left(\left[pt.\right]\right)\ldots\psi_n^{k_n}\text{ev}_n^*\left(\left[pt.\right]\right)
\end{equation*}

These intersection numbers are referred to as {\bf{stationary descendent Gromov-Witten invariants}} of $E$. The word `stationary' is used because the classes $\text{ev}_i^*\left(\left[pt.\right]\right)$ restrain the stable map to send the marked points to a fixed set of points. Our first goal in this section is to answer: \emph{how do we compute stationary descendent GW invariants of E?} Fortunately, there is a nice closed formula for these intersection numbers.

In order to write down this closed formula, we need to recall some basic facts from the representation theory of the symmetric group (see, for example, \cite{sagan2013symmetric}). Let $S_d$ be the symmetric group on $d$ letters. The {\bf{irreducible characters}} of $S_d$ are in bijection with the set of partitions $\left\{\lambda \vdash d\right\}$. We denote the irreducible character corresponding to $\lambda \vdash d$ by $\chi^{\lambda}$. The conjugacy classes of $S_d$ are also indexed by partitions $\mu \vdash d$. This is due to the fact that $\pi_1, \pi_2 \in S_d$ are conjugate to each other if and only if they have the same cycle type $\mu$. Let $\pi_\lambda$ be any permutation of cycle type $\lambda$. The order of the {\bf{centralizer}} of $\pi_\lambda$, that is, the order of the subgroup of elements in $S_d$ that commute with $\pi_\lambda$ only depends on $\lambda$, and is computed by $z_\lambda = \left(\prod_{i = 1}^dm_i!\right)\left(\prod_{i = 1}^{\ell(\lambda)}\lambda_i\right)$, where $m_i$ is the number of times the number $i$ appears in the partition $\lambda$. Since the irreducible character $\chi^{\lambda}$ is constant on conjugacy classes of $S_d$, we can unambiguously define the integer $\chi^{\lambda}_\mu$ as the value of $\chi^{\lambda}$ evaluated at any element in the conjugacy class corresponding to $\mu$. There are two standard ways to compute the numbers $\chi^{\lambda}_{\mu}$, either via \emph{Frobenius' formula} (\cite{fulton2013representation}, Frobenius Formula 4.10), or the \emph{Murnaghan-Nakayama rule} (\cite{fulton2013representation} Problem 4.45).

Next, we need the notion of a {\bf{shifted symmetric power sum}}. Shifted symmetric power sums are examples of \emph{shifted symmetric functions}. The ring $\Lambda^*$ of shifted symmetric functions is a deformation of the standard ring of symmetric functions $\Lambda$. They were introduced and studied by Okounkov and Olshanksi \cite{okounkov1997shifted}, and form a crucial ingredient in the celebrated Bloch-Okounkov Theorem \cite{bloch2000character}. For the purposes of this paper, we only need the following definition.

\begin{Definition}
Let $k \geq 0$ be an integer, and let $\lambda = (\lambda_1, \ldots, \lambda_m) \vdash d$ be a partition. The {\bf{shifted symmetric power sum of order $k$}} evaluated at $\lambda$ is

\begin{equation*}
p_k(\lambda) := \sum_{i = 1}^m \left[ \left( \lambda_i - i + \frac{1}{2} \right)^k - \left( -i + \frac{1}{2} \right)^k \right] - (1 - 2^{-k})\frac{B_{k}}{2k}
\end{equation*}
\end{Definition}

\noindent We now have all of the ingredients to compute the intersection number $\left<\tau_{k_1}\ldots\tau_{k_n}\right>^{\bullet E}_d$.

\begin{Theorem}[Gromov-Witten/Hurwitz Correspondence, \cite{okounkov2006gromov}]\label{GWH}

The intersection numbers $\left<\tau_{k_1}\ldots\tau_{k_n}\right>^{\bullet E}_d$ have the following closed form expression:

\begin{equation*}
\left<\tau_{k_1}\ldots\tau_{k_n}\right>^{\bullet E}_d = \frac{1}{\prod_{i = 1}^n(k_i + 1)!}\sum_{\lambda, \mu \vdash d} \frac{1}{z_\lambda}\left(.\chi^{\lambda}_\mu \right)^2 \prod_{i = 1}^np_{k_i + 1}(\lambda)
\end{equation*}

\end{Theorem}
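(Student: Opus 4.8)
The plan is to prove Theorem~\ref{GWH} in two stages: a geometric stage trading the descendent Gromov-Witten integrals for weighted counts of branched covers of $E$, and a representation-theoretic stage evaluating those counts by the Frobenius character formula. The geometric stage is the content of the Gromov-Witten/Hurwitz correspondence of Okounkov and Pandharipande \cite{okounkov2006gromov}, which I would cite rather than rebuild; the combinatorial stage then reduces to the orthogonality relations for the irreducible characters of $S_d$ together with the definition of the shifted symmetric power sums. A pleasant feature of the setup is that the invariant on the left is \emph{disconnected} (the bullet in $\mbar^{\bullet}_{g,n}$), which matches the character sum on the right exactly: the Frobenius formula computes the disconnected count with no connectedness (logarithmic) correction.

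First I would set up the dictionary between descendents and \emph{completed cycles}. Each stationary insertion $\psi_i^{k_i}\,\mathrm{ev}_i^*([pt])$ corresponds to a ramification condition over a marked point of $E$, namely the completed $(k_i+1)$-cycle, a distinguished central element of $\mathbb{Q}[S_d]$. The shift $k_i \mapsto k_i+1$ records that the point class $\mathrm{ev}_i^*([pt])$ already imposes one unit of ramification, and the factor $\tfrac{1}{(k_i+1)!}$ is the normalization converting a power of $\psi$ into a completed cycle. The essential reason one must use \emph{completed} rather than honest cycles is that the descendent integral also collects contributions from degenerate, lower-dimensional strata; these are packaged precisely by the lower-order conjugacy-class corrections comprising the completed cycle, so that the genus fixed by the virtual-dimension constraint $2g-2 = \sum_i k_i$ is summed over automatically. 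By the theory of shifted symmetric functions \cite{okounkov1997shifted}, the central character of the completed $(k_i+1)$-cycle on the irreducible module $V_\lambda$ is, up to the displayed normalization, the shifted symmetric power sum $p_{k_i+1}(\lambda)$; this is the origin of the factors $\prod_i p_{k_i+1}(\lambda)$.

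With this dictionary, the problem becomes counting degree-$d$ covers of $E$ with the prescribed completed monodromy over the $n$ marked points, each weighted by $1/|\mathrm{Aut}|$. Writing $\pi_1$ of $E$ minus the marked points with generators $a,b$ around the handle and loops $\gamma_1,\dots,\gamma_n$ around the punctures subject to $[a,b]\gamma_1\cdots\gamma_n = 1$, this weighted count is $\tfrac{1}{d!}$ times the number of tuples $(a,b,\gamma_1,\dots,\gamma_n)$ in $S_d$ satisfying the relation with each $\gamma_i$ drawn from the prescribed central element. I would evaluate it by projecting onto the coefficient of the identity and expanding in irreducible characters: the central insertions act on $V_\lambda$ by the scalars $p_{k_i+1}(\lambda)/(k_i+1)!$, and the handle $[a,b]$ is resolved by the identity $\sum_{a,b}\mathrm{tr}_{V_\lambda}\rho_\lambda([a,b]) = (d!)^2/\dim\lambda$. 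This already yields the single sum $\tfrac{1}{\prod_i(k_i+1)!}\sum_\lambda \prod_i p_{k_i+1}(\lambda)$. The stated double-sum form is the same count organized instead by the conjugacy class $\mu$ of a generator, with the centralizer order written as $z_\mu = \sum_\lambda(\chi^\lambda_\mu)^2$ (column orthogonality); the two presentations are reconciled by the row relation $\sum_\mu \tfrac{1}{z_\mu}(\chi^\lambda_\mu)^2 = 1$.

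The main obstacle lies entirely in the first stage. The geometric Gromov-Witten/Hurwitz correspondence and the precise descendent-to-completed-cycle dictionary---the identification of $\psi_i^{k_i}\mathrm{ev}_i^*([pt])$ with $\tfrac{1}{(k_i+1)!}$ times a completed cycle, and of that cycle's central character with $p_{k_i+1}(\lambda)$---are the deep inputs, established in \cite{okounkov2006gromov} through the operator formalism on the infinite wedge together with virtual localization and degeneration of the target. Granting these, the remainder---the Frobenius/Burnside count for the torus and the character orthogonality producing the $\tfrac{1}{z}(\chi^\lambda_\mu)^2$ weighting---is routine and self-contained. I would therefore present the combinatorial stage in full and cite the correspondence for the geometric input.
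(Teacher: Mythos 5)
The paper offers no proof of Theorem~\ref{GWH}: it is quoted as a black box from \cite{okounkov2006gromov}. Your proposal therefore goes beyond the paper by sketching how the result is actually established --- deferring the geometric GW/Hurwitz correspondence and the descendent-to-completed-cycle dictionary to the citation, and then carrying out the Burnside/Frobenius count for covers of the torus. That division of labor is reasonable, and your combinatorial stage is sound: the commutator identity $\sum_{a,b}\mathrm{tr}\,\rho_\lambda([a,b]) = (d!)^2/\dim\lambda$ kills the dimension factors in target genus one, and identifying the central character of the normalized completed $(k_i+1)$-cycle with $p_{k_i+1}(\lambda)/(k_i+1)!$ yields the single-sum form $\sum_{\lambda\vdash d}\prod_i p_{k_i+1}(\lambda)/(k_i+1)!$, which is the formula as it appears in \cite{okounkov2006gromov}.

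One point deserves attention. Your passage from that single sum to the displayed double sum inserts the factor $\sum_{\mu}\tfrac{1}{z_\mu}\left(\chi^{\lambda}_{\mu}\right)^2 = 1$ (row orthogonality), i.e.\ you use $z_\mu$. The theorem as printed has $\tfrac{1}{z_\lambda}$ instead. The identity that would be needed to justify the printed version, namely $\sum_{\mu\vdash d}\left(\chi^{\lambda}_{\mu}\right)^2 = z_\lambda$ (which also appears as a ``standard fact'' in the proof of Proposition~\ref{allzeroes}), is false in general: for the sign representation of $S_3$ the left-hand side equals $3$ while $z_{(1,1,1)} = 6$; more structurally, the left-hand side is invariant under conjugating the partition $\lambda$ (since tensoring with the sign character only changes signs of character values), whereas $z_\lambda$ is not. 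The correct relation is column orthogonality, $\sum_{\lambda}\left(\chi^{\lambda}_{\mu}\right)^2 = z_\mu$, exactly as you state it. So what your argument actually proves is the statement with $z_\mu$ in place of $z_\lambda$; either you should say explicitly that you are proving that corrected form, or the displayed formula must be amended before your reconciliation step (and the proof of Proposition~\ref{allzeroes}) goes through.
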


A formal power series that will come up often in our calculations is the so-called {\bf{Euler function}}, defined by

\begin{equation}\label{euler}
(q)_\infty := \prod_{n \geq 1}(1 - q^n) = \sum_{d = -\infty}^\infty (-1)^dq^{\frac{3d^2 - d}{2}}
\end{equation}

\noindent The second equality in Equation \ref{euler} is often referred to as the \emph{Pentagonal Number Theorem}. It will help us write down closed-form expressions for Ramanujan tau values in terms of Gromov-Witten invariants. The individual intersection numbers $\left<\tau_{k_1}\ldots\tau_{k_n}\right>^{\bullet E}_d$ are not our main focus, but rather, we want to stitch them together into the following generating functions:

\begin{Definition}
Let $k_1, \ldots, k_n$ be nonnegative integers. We call
\begin{equation*}
\left<\tau_{k_1}\ldots\tau_{k_n}\right> := (q)_\infty \sum_{d \geq 0}\left<\tau_{k_1}\ldots\tau_{k_n}\right>^{\bullet E}_d q^d
\end{equation*}
\noindent a {\bf{stationary descendent of weight $k = \sum(k_i + 2)$}}
\end{Definition}

\begin{Theorem}\label{opquasimodular}[\cite{okounkov2006gromov}]
Stationary descendents of weight $k$ are quasimodular forms of weight $k$, that is, if $\sum_{i = 1}^nk_i = k$, $\left<\tau_{k_1}\ldots\tau_{k_n}\right> \in \text{QM}_{k}$ 
\end{Theorem}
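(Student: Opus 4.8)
The plan is to reduce the statement to the Bloch--Okounkov theorem on $q$-brackets of shifted symmetric functions, using the closed formula of Theorem~\ref{GWH} as the bridge: that formula already expresses the disconnected invariants through symmetric-group characters and the shifted power sums $p_k$, and the only remaining work is to eliminate the combinatorial data (the characters) and recognize the resulting generating series as the image of a shifted symmetric function under the $q$-bracket.

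First I would collapse the character sum. In the expression of Theorem~\ref{GWH} the conjugacy-class index enters only through the factor $z_\mu^{-1}\bigl(\chi^\lambda_\mu\bigr)^2$, so summing over all $\mu \vdash d$ and invoking the orthonormality of the irreducible characters, $\sum_{\mu \vdash d} z_\mu^{-1}\bigl(\chi^\lambda_\mu\bigr)^2 = 1$, leaves the single sum
\[
\left<\tau_{k_1}\ldots\tau_{k_n}\right>^{\bullet E}_d = \sum_{\lambda \vdash d} F(\lambda), \qquad F := \prod_{i=1}^n \frac{p_{k_i+1}}{(k_i+1)!} \in \Lambda^*.
\]
Multiplying by $q^d$, summing over $d$, and using that the partition generating function equals $\sum_\lambda q^{|\lambda|} = \prod_{n\ge 1}(1-q^n)^{-1} = (q)_\infty^{-1}$, the normalization by $(q)_\infty$ produces exactly the $q$-bracket
\[
\left<\tau_{k_1}\ldots\tau_{k_n}\right> = (q)_\infty \sum_\lambda F(\lambda)\, q^{|\lambda|} = \frac{\sum_\lambda F(\lambda) q^{|\lambda|}}{\sum_\lambda q^{|\lambda|}} =: \langle F \rangle_q.
\]

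The Bloch--Okounkov theorem \cite{bloch2000character} asserts that $\langle g \rangle_q$ is quasimodular for every $g \in \Lambda^*$, which delivers quasimodularity at once. Since $F$ is homogeneous of weight $\sum_i\bigl((k_i+1)+1\bigr) = \sum_i(k_i+2) = k$ in the weight grading of $\Lambda^*$ (in which the order-$m$ shifted power sum $p_m$ carries weight $m+1$), the bracket lands in $\mathrm{QM}_{\le k}$. The main obstacle is upgrading this weight bound to the \emph{homogeneity} ``weight exactly $k$'' claimed in the statement, and this is precisely where the constant ($\zeta$-value) corrections built into $p_k$ do their work: they are tuned so that the lower-weight contributions of the $q$-bracket cancel.

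One sees the mechanism already for $\left<\tau_0\tau_0\right>$, where $F = p_1^2$ and $p_1(\lambda) = |\lambda| + \text{const}$. Writing $\langle |\lambda|^m \rangle_q = (q)_\infty\,(q\tfrac{d}{dq})^m\,(q)_\infty^{-1}$, one finds $\langle |\lambda| \rangle_q = E_2 + \tfrac{1}{24}$ and $\langle |\lambda|^2 \rangle_q = \langle |\lambda|\rangle_q^2 + q\tfrac{d}{dq}\langle |\lambda|\rangle_q$; expanding $\langle p_1^2\rangle_q$ the weight-$2$ multiples of $E_2$ and the weight-$0$ constants cancel exactly, leaving $E_2^2 + q\tfrac{d}{dq}E_2$, which is homogeneous of weight $4$ because $q\tfrac{d}{dq}$ raises the weight of a quasimodular form by two. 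Carrying out this cancellation for arbitrary products $\prod_i p_{k_i+1}$ is the technical heart of the theorem; the route of Okounkov--Pandharipande is to evaluate the full $n$-point function in the infinite-wedge representation as a sum over $S_n$ of products of Jacobi theta quotients, whose $\mathrm{SL}_2(\mathbb{Z})$-transformation tracks the scaling weight of the descendent variables and thereby forces the output to be homogeneous of weight $k$.
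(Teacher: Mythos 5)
The paper does not actually prove this theorem: it is quoted from \cite{okounkov2006gromov}, so there is no internal proof to compare against. Your proposal reconstructs the standard argument from the literature, and the route you choose --- collapse the character sum in Theorem~\ref{GWH}, recognize the normalized generating function as the $q$-bracket $\langle F\rangle_q$ of the shifted symmetric function $F=\prod_i p_{k_i+1}/(k_i+1)!$, and invoke Bloch--Okounkov --- is indeed how Okounkov and Pandharipande obtain quasimodularity. Your worked example checks out: $\langle p_1^2\rangle_q = E_2^2 + q\frac{d}{dq}E_2 = \frac{5}{6}E_4 - E_2^2$, matching the paper's expansion of $\left<\tau_0^2\right>$. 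One point of friction: Theorem~\ref{GWH} as printed carries the factor $z_\lambda^{-1}$, under which $\sum_{\mu\vdash d} z_\lambda^{-1}\bigl(\chi^\lambda_\mu\bigr)^2$ is not $1$ (for $\lambda=(2,1)\vdash 3$ it equals $5/2$); the orthogonality relation you invoke, $\sum_{\mu} z_\mu^{-1}\bigl(\chi^\lambda_\mu\bigr)^2=1$, needs $z_\mu$ in the denominator. You have silently corrected what appears to be a typo in the paper's formula; the endpoint $\left<\tau_{k_1}\ldots\tau_{k_n}\right>^{\bullet E}_d=\sum_{\lambda\vdash d}F(\lambda)$ is the correct form of the GW/Hurwitz correspondence for the elliptic curve, but the discrepancy should be flagged rather than elided.

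The genuine gap is the one you name yourself: homogeneity. Bloch--Okounkov applied to the weight-$k$ element $F$ yields $\langle F\rangle_q\in\text{QM}_{\le k}$, whereas the statement asserts membership in the graded piece $\text{QM}_k$. Your verification that the lower-weight contributions cancel is carried out only for $n\le 2$, and the concluding sentence --- that the infinite-wedge $n$-point function is a sum of Jacobi theta quotients whose transformation law forces purity of weight --- describes where the argument lives rather than giving it. Since the constant corrections built into $p_k$ are precisely what make purity true, and since purity is exactly the part of the statement that plain Bloch--Okounkov does not deliver, this step cannot be waved through: as written, your argument establishes $\left<\tau_{k_1}\ldots\tau_{k_n}\right>\in\text{QM}_{\le k}$ for general $n$ and pure weight only for $n\le 2$. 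For a cited result this is a reasonable sketch of the external proof, but it is not yet a complete proof of the theorem as stated.
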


\begin{Example}
It's a straightforward calculation to show that
\begin{equation*}
\left<\tau_0\right> = E_2
\end{equation*}

\noindent Here is a less trivial example in weight $8$:

\begin{equation*}
\left<\tau_2^2\right> = \frac{1}{12}E_6E_2 + \frac{73}{112}E_4^2 - \frac{3}{4}E_4E_2^2 - \frac{15}{4}E_2^3
\end{equation*}

\noindent In the Appendix, we explain how one can use the author's Sage code to make such calculations.
\end{Example}

Theorem \ref{opquasimodular} allows us to go from GW invariants to quasimodular forms. But it turns out that we can also go the other way around. 

\begin{Proposition}\label{qmviadescendents}
Let $(\left<\tau_{\text{wt.} = 2}\right>, \left<\tau_{\text{wt.} = 4}\right>, \left<\tau_{\text{wt.} = 6}\right>)$ be a triple of stationary descendents of weight $2, 4$, and $6$, respectively. Then

\begin{equation*}
\text{QM} = \mathbb{Q}\left[\left<\tau_{\text{wt.} = 2}\right>,  \left<\tau_{\text{wt.} = 4}\right>, \left<\tau_{\text{wt.} = 6}\right> \right]
\end{equation*}

\end{Proposition}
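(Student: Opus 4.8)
The plan is to show that the three stationary descendents generate $\text{QM}$ as a polynomial ring by exhibiting them as an explicit change of generators from $E_2, E_4, E_6$. We already know from the structure theory of quasimodular forms that $\text{QM} = \mathbb{Q}[E_2, E_4, E_6]$ is a free polynomial ring, and that the graded pieces have dimensions $\dim \text{QM}_2 = 1$, $\dim \text{QM}_4 = 2$, $\dim \text{QM}_6 = 3$ (spanned by $E_2$; $E_2^2, E_4$; $E_2^3, E_2 E_4, E_6$, respectively). Since each $\left<\tau_{\text{wt.}=k}\right>$ lives in $\text{QM}_k$ by Theorem \ref{opquasimodular}, the three given descendents are elements of these graded pieces, and the whole question reduces to checking that, together with lower-weight products, they freely generate.

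First I would reduce the claim to a statement in weights $2, 4, 6$ only. Any polynomial generating set of a graded polynomial ring with one generator in each of the degrees $2, 4, 6$ is equivalent to the single condition that the chosen elements $g_2 \in \text{QM}_2$, $g_4 \in \text{QM}_4$, $g_6 \in \text{QM}_6$ are \emph{algebraically independent}, since a surjection of graded rings $\mathbb{Q}[x_2, x_4, x_6] \to \text{QM}$ matching the Hilbert series must be an isomorphism. Concretely, $g_2$ generates $\text{QM}_2$ iff $g_2 = c\, E_2$ with $c \neq 0$; then $\text{QM}_4 = \mathbb{Q}\, g_2^2 \oplus \mathbb{Q}\, E_4$, so $g_4$ together with $g_2$ generates $\text{QM}_4$ iff the $E_4$-coefficient of $g_4$ is nonzero; and finally $\text{QM}_6 = \mathbb{Q}\, g_2^3 \oplus \mathbb{Q}\, g_2 g_4 \oplus \mathbb{Q}\, E_6$, so $g_6$ completes a generating set iff its $E_6$-coefficient is nonzero. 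Thus the entire proposition follows once I verify three nonvanishing conditions on the leading ($E_k$-) coefficients of the three descendents.

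The core computation is therefore to evaluate these leading coefficients, which I would extract using Theorem \ref{GWH} together with the known example $\left<\tau_0\right> = E_2$. In weight $2$ there is a unique positive-part possibility and indeed $\left<\tau_0\right> = E_2$, handling $g_2$. For weights $4$ and $6$ I would argue that \emph{every} stationary descendent has a nonzero top-depth (Eisenstein) component: since $E_4$ is the unique modular form of weight $4$ up to scale and the modular (depth-zero) part of a descendent cannot vanish identically for dimension reasons, the $E_4$-coefficient is forced to be nonzero, and similarly in weight $6$ the $E_6$-coefficient is nonzero. Alternatively, and more robustly, I would simply compute the $q$-expansions of $\left<\tau_{\text{wt.}=4}\right>$ and $\left<\tau_{\text{wt.}=6}\right>$ to enough orders via the closed formula and read off the relevant coefficients after subtracting the lower-weight products $g_2^2$ and $g_2^3, g_2 g_4$.

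The main obstacle I anticipate is establishing the nonvanishing of the top coefficients in a way that is uniform over \emph{all} choices of the triple, rather than for one fixed convenient choice such as $(\left<\tau_0\right>, \left<\tau_2\right>, \left<\tau_4\right>)$. A clean resolution is to prove the structural fact that a stationary descendent of weight $k$ always has maximal depth with a nonzero leading quasimodular coefficient; this can be seen from the growth behavior of the power-sums $p_{k_i+1}(\lambda)$ in Theorem \ref{GWH}, which contribute the nonmodular $E_2$-heavy terms and guarantee the descendent is never a form of strictly lower depth. Once this structural statement is in hand, the proposition reduces to the elementary linear-algebra fact recorded above, and the proof concludes by the graded-Hilbert-series argument.
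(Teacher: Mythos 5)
Your reduction of the proposition to three nonvanishing conditions (the weight-$2$ descendent is a nonzero multiple of $E_2$, the $E_4$-coefficient of the weight-$4$ descendent is nonzero, the $E_6$-coefficient of the weight-$6$ descendent is nonzero) is exactly the logic of the paper's proof, and your ``more robust'' fallback --- computing the expansions explicitly and reading off those coefficients --- is literally what the paper does. The ``uniformity over all choices'' you worry about is a non-issue: there is one weight-$2$ descendent ($\left<\tau_0\right> = E_2$), two weight-$4$ descendents and four weight-$6$ descendents, so the whole matter is a finite check of six Eisenstein expansions (Equations \ref{weightfour} and \ref{weightsix}), each of which visibly has nonzero $E_4$- resp.\ $E_6$-coefficient.

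The gap is in the ``clean structural resolution'' you propose to conclude with instead. You conflate the depth-zero (modular) component of a quasimodular form with its maximal-depth component. In the decomposition $f = \sum_{j=0}^{s} m_j E_2^j$ of a weight-$4$ form, the $E_4$-coefficient you need to be nonzero is $m_0$, the \emph{depth-zero} part; the quantity your proposed lemma controls --- ``maximal depth with a nonzero leading coefficient,'' justified by the $E_2$-heavy growth of the shifted power sums $p_{k_i+1}$ --- is $m_s$ with $s = k/2$, the coefficient of $E_2^{k/2}$. Nonvanishing of the top-depth part says nothing about the modular part: $E_2^2$ is itself a weight-$4$ quasimodular form of maximal depth whose $E_4$-coefficient is zero, so there is no ``dimension reason'' forcing $m_0 \neq 0$, and a descendent could a priori be proportional to $E_2^2$ without contradicting your lemma. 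The nonvanishing of the modular parts genuinely has to be verified, and for this proposition the finite explicit computation \emph{is} the proof. If you drop the structural detour and keep the explicit verification, your argument coincides with the paper's.
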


\begin{proof}

Let $(\left<\tau_{\text{wt.} = 2}\right>, \left<\tau_{\text{wt.} = 4}\right>, \left<\tau_{\text{wt.} = 6}\right>)$ be a triple of stationary descendents in weights $2, 4$, and $6$, respectively. The descendent $\left<\tau_{\text{wt.}=2}\right>$ is necessarily $\left<\tau_0\right> = E_2$, so \\ $E_2 \in \mathbb{Q}\left[\left<\tau_{\text{wt.} = 2}\right>,  \left<\tau_{\text{wt.} = 4}\right>, \left<\tau_{\text{wt.} = 6}\right> \right]$. There are two choices for $\left<\tau_{\text{wt.} = 4}\right>$ and $4$ choices for $\left<\tau_{\text{wt.}=6}\right>$ (see Figure \ref{eighttypes}).  Since $\left<\tau_{\text{wt.}=4}\right> \in \text{QM}_4$ and $\left<\tau_{\text{wt.}=6}\right> \in \text{QM}_6$, there exists rational numbers $a_i, 1 \leq i \leq 5$, such that

\begin{align*}
\left<\tau_{\text{wt.}=4}\right> & = a_1E_4 + a_2E_2^2 \\
\left<\tau_{\text{wt.=6}}\right> & = a_3E_6 + a_4E_4E_2 + a_5E_2^3
\end{align*} 

\noindent In order to show that $E_4 \in \mathbb{Q}\left[\left<\tau_{\text{wt.} = 2}\right>,  \left<\tau_{\text{wt.} = 4}\right>, \left<\tau_{\text{wt.} = 6}\right> \right]$, we need to show that $a_1 \not= 0$. Indeed, for the two choices of $\left<\tau_{\text{wt.}=4}\right>$, we have

\begin{align}\label{weightfour}
\left<\tau_2\right> & = \frac{1}{12}E_4 + \frac{1}{2}E_2^2 \\
\notag \left<\tau_0^2\right> & = \frac{5}{6}E_4 - E_2^2
\end{align}

\noindent and it follows that $E_4 \in \mathbb{Q}\left[\left<\tau_{\text{wt.} = 2}\right>,  \left<\tau_{\text{wt.} = 4}\right>, \left<\tau_{\text{wt.} = 6}\right> \right]$. Similarly, since

\begin{align}\label{weightsix}
\left<\tau_4\right> & = \frac{1}{360}E_6 + \frac{1}{12}E_4E_2 + \frac{1}{6}E_2^3 \\
\notag \left<\tau_2\tau_0\right> & = \frac{7}{120}E_6 + \frac{1}{4}E_4E_2 - \frac{3}{2}E_2^3  \\
\notag \left<\tau_1^2\right> & = \frac{7}{180}E_6 + \frac{2}{3}E_4E_2 - \frac{8}{3}E_2^3\\
\notag \left<\tau_0^3\right> & = \frac{7}{12}E_6 - \frac{15}{2}E_4E_2 + 3E_2^3
\end{align}

\noindent it follows that $E_6 \in \mathbb{Q}\left[\left<\tau_{\text{wt.} = 2}\right>,  \left<\tau_{\text{wt.} = 4}\right>, \left<\tau_{\text{wt.} = 6}\right> \right]$.

\end{proof}

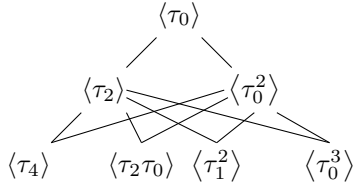
\begin{figure}
\centering
\begin{tikzpicture}

\draw (0, 0) node {$\left<\tau_0\right>$};
\draw (-1, -1) node {$\left<\tau_2\right>$};
\draw (1, -1) node {$\left<\tau_0^2\right>$};
\draw (-2, -2) node {$\left<\tau_4\right>$};
\draw (-0.5, -2) node {$\left<\tau_2\tau_0\right>$};
\draw (0.5, -2) node {$\left<\tau_1^2\right>$};
\draw (2, -2) node {$\left<\tau_0^3\right>$};

\draw (0.3, -0.3) -- (0.7, -0.7);
\draw (-0.3, -0.3) -- (-0.7, -0.7);

\draw (-1.3, -1.3) -- (-1.7, -1.7);
\draw (-0.7, -1.3) -- (-0.5, -1.7);
\draw (-0.7, -1.1) -- (0.5, -1.7);
\draw (-0.7, -1) -- (2, -1.7);

\draw (0.6, -1) -- (-1.7, -1.7);
\draw (0.65, -1.1) -- (-0.5, -1.7);
\draw (1, -1.3) -- (0.5, -1.7);
\draw (1.3, -1.3) -- (2, -1.7);

\end{tikzpicture}
\caption{A simple visualization to construct all $8$ triples of stationary descendents that generate the ring of quasimodular forms. The top row corresponds to the weight $2$ stationary descendent, the middle row the weight $4$ descendents, and the bottom row the weight $6$ descendents. Each triple corresponds to a path from the top row to the bottom.}
\label{eighttypes}
\label{visualizingtypes}
\end{figure}

Let $p(d)$ be the {\bf{partition function}} i.e. $p(d) = \#\{\text{partitions of the integer $d$}\}$. The triple $\left(\left<\tau_0\right>, \left<\tau_0^2\right>, \left<\tau_0^3\right>\right)$ is particularly nice since there is a closed-form expression for $\left<\tau_0^n\right>$ in terms of the partition function $p(d)$. 

\begin{Proposition}\label{allzeroes}

We have

\begin{equation*}
24^n\left<\tau_0^n\right> = (q)_\infty\sum_{d \geq 0}(24d - 1)^np(d)q^d
\end{equation*}

\end{Proposition}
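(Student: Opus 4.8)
The plan is to compute the individual intersection numbers $\left<\tau_0^n\right>^{\bullet E}_d$ in closed form with Theorem \ref{GWH}, and then reassemble the generating series. Setting $k_1 = \cdots = k_n = 0$ in the Gromov--Witten/Hurwitz correspondence, every factorial $(k_i+1)! = 1$ and every $p_{k_i+1}(\lambda)$ collapses to $p_1(\lambda)$, so that
\[
\left<\tau_0^n\right>^{\bullet E}_d = \sum_{\lambda,\mu\vdash d}\frac{1}{z_\mu}\left(\chi^\lambda_\mu\right)^2 p_1(\lambda)^n .
\]
(The weight entering the orthogonality relations sits on the conjugacy-class index $\mu$.) The whole argument then reduces to two elementary observations: that $p_1(\lambda)$ depends only on $|\lambda| = d$, and that the character-theoretic weight sums to the partition number $p(d)$.

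For the first, I would read off $p_1(\lambda)$ directly from the definition of the shifted symmetric power sum. With $k=1$ the bracketed summands telescope, since $\left(\lambda_i - i + \tfrac12\right) - \left(-i+\tfrac12\right) = \lambda_i$, so the sum part equals $\sum_i \lambda_i = d$; the remaining normalization constant contributes $-\tfrac{1}{24}$, giving $p_1(\lambda) = d - \tfrac1{24} = \tfrac{24d-1}{24}$ for \emph{every} $\lambda \vdash d$. Because this value is independent of $\lambda$, I pull $p_1(\lambda)^n = \left(\tfrac{24d-1}{24}\right)^n$ outside the double sum. For the second observation, the first orthogonality relation for the irreducible characters of $S_d$ gives $\sum_{\mu\vdash d}\tfrac{1}{z_\mu}\left(\chi^\lambda_\mu\right)^2 = \langle\chi^\lambda,\chi^\lambda\rangle = 1$, and summing this constant over the $p(d)$ irreducibles indexed by $\lambda\vdash d$ yields $\sum_{\lambda,\mu\vdash d}\tfrac{1}{z_\mu}\left(\chi^\lambda_\mu\right)^2 = p(d)$. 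Combining the two produces the coefficient identity $24^n\left<\tau_0^n\right>^{\bullet E}_d = (24d-1)^n\,p(d)$.

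Finally I would reassemble: multiplying the coefficient identity by $q^d$, summing over $d\geq 0$, and multiplying through by $(q)_\infty$ gives exactly
\[
24^n\left<\tau_0^n\right> = (q)_\infty\sum_{d\geq 0}(24d-1)^n p(d)\, q^d ,
\]
using only the defining expression of $\left<\tau_0^n\right>$ as a generating series. The one place demanding care is the precise value of the normalization constant in $p_1$: one must confirm it equals $-\tfrac1{24}$, and the cleanest way to pin this down is the identity $p_1(\emptyset) = -\tfrac1{24}$, which matches the constant term of $E_2 = \left<\tau_0\right>$ and is consistent with the $n=1$ case of the proposition. Everything else is a routine use of the telescoping in the definition of $p_1$ together with character orthogonality.
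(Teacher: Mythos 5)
Your argument is correct and is essentially the paper's own proof: both reduce the statement to the two observations that $p_1(\lambda) = d - \tfrac{1}{24}$ for every $\lambda \vdash d$ and that the weighted character double sum over $\lambda, \mu \vdash d$ collapses to $p(d)$, and then reassemble the generating series exactly as you do. The only divergence is in the orthogonality step, where you place the weight $\tfrac{1}{z_\mu}$ on the conjugacy-class index and invoke the row orthogonality $\sum_{\mu \vdash d}\tfrac{1}{z_\mu}\left(\chi^\lambda_\mu\right)^2 = 1$, whereas the paper keeps the weight $\tfrac{1}{z_\lambda}$ as printed in Theorem \ref{GWH} and quotes the transposed identity $\sum_{\mu \vdash d}\left(\chi^\lambda_\mu\right)^2 = z_\lambda$; your form of the orthogonality relation is the standard one, so this is a point of care rather than a gap.
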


\begin{proof}

It is a standard fact that, for all partitions $\lambda \vdash d$,

\begin{equation*}
\sum_{\mu \vdash d}\left( \chi^\lambda_\mu\right)^2 = z_\lambda
\end{equation*}

\noindent Therefore, we have

\begin{equation}\label{partition}
\sum_{\lambda, \mu \vdash d}\frac{1}{z_\lambda}\left(\chi^\lambda_\mu\right)^2 = \sum_{\lambda \vdash d} \frac{1}{z_\lambda}\sum_{\mu \vdash d} \left( \chi^\lambda_\mu\right)^2 = \sum_{\lambda \vdash d} 1 = p(d)
\end{equation}

\noindent Furthermore, since $p_1(\lambda) = d - \frac{1}{24}$, the desired result then follows from the following computation,

\begin{align*}
24^n\left<\tau_0^n\right> & = 24^n(q)_\infty\sum_{d \geq 0}\left<\tau_0^n\right>^{\bullet E}_dq^d \hspace{4.7cm} \left(\text{By definition}\right) \\
& = 24^n(q)_{\infty}\sum_{d \geq 0} \left( \sum_{\lambda, \mu \vdash d}\frac{1}{z_\lambda}\left(\chi^\lambda_\mu\right)^2 \left(d - \frac{1}{24}\right)^n \right)q^d \hspace{1cm} \left(\text{Since $p_1(\lambda) = d - \frac{1}{24}$}\right) \\
& = (q)_\infty\sum_{d \geq 0}(24d - 1)^np(d)q^d \hspace{4.4cm} (\text{By Equation \ref{partition}})
\end{align*}

\end{proof}

Proposition \ref{qmviadescendents} tells us that, in addition to the triple $(E_2, E_4, E_6)$, we can use any triple $(\left<\tau_{\text{wt.} = 2}\right>, \left<\tau_{\text{wt.} = 4}\right>, \left<\tau_{\text{wt.} = 6}\right>)$ of stationary descendents of weight $2, 4$, and $6$, respectively, to generate $\text{QM}$. Furthermore, there are only $8$ such triples (see Figure \ref{eighttypes}). In order to distinguish whether we are generating $\text{QM}_k$ in terms of Eisenstein series or stationary descendents, we make the following definition.

\begin{Definition}
Let $(\left<\tau_{\text{wt.} = 2}\right>, \left<\tau_{\text{wt.} = 4}\right>, \left<\tau_{\text{wt.} = 6}\right>)$ be a triple of of stationary descendents in weights $2, 4$, and $6$, respectively. For any $k \in 2\mathbb{Z}$, the collection of forms 

\begin{equation*}
\left\{ \left<\tau_{\text{wt,} = 2}\right>^a\left<\tau_{\text{wt.}=4}\right>^b\left<\tau_{\text{wt.}=6}\right>^c  \right\}_{2a + 4b + 6c = k}
\end{equation*}

\noindent is the {\bf{descendent basis of weight $k$}} corresponding to the triple $(\left<\tau_{\text{wt.} = 2}\right>, \left<\tau_{\text{wt.} = 4}\right>, \left<\tau_{\text{wt.} = 6}\right>)$. The collection

\begin{equation*}
\left\{ E_2^aE_4^bE_6^c  \right\}_{2a + 4b + 6c = k}
\end{equation*}

\noindent is the {\bf{Eisenstein basis of weight $k$}}.
\end{Definition}

\begin{Corollary}\label{nonlineardiscriminant}
The discriminant modular form $\Delta$ has $8$ different expressions in terms of the $8$ descendent bases of $\text{QM}_{12}$.
\end{Corollary}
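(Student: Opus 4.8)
The plan is to leverage Proposition \ref{qmviadescendents} directly. That proposition establishes that each of the $8$ triples $(\langle\tau_{\text{wt.}=2}\rangle, \langle\tau_{\text{wt.}=4}\rangle, \langle\tau_{\text{wt.}=6}\rangle)$ freely generates $\text{QM}$ as a polynomial ring. Since $\Delta \in \text{QM}_{12}$, and since the descendent basis of weight $12$ corresponding to any fixed triple is a genuine basis of the vector space $\text{QM}_{12}$, there exists a unique expression of $\Delta$ as a $\mathbb{Q}$-linear combination of the monomials $\langle\tau_{\text{wt.}=2}\rangle^a\langle\tau_{\text{wt.}=4}\rangle^b\langle\tau_{\text{wt.}=6}\rangle^c$ with $2a + 4b + 6c = 12$. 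The heart of the argument is therefore to verify that each of the $8$ triples actually yields $8$ \emph{distinct} expressions, so that we get exactly $8$ and not fewer.

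First I would invoke Figure \ref{eighttypes}: there are $2$ choices for the weight-$4$ descendent ($\langle\tau_2\rangle$ or $\langle\tau_0^2\rangle$) and $4$ choices for the weight-$6$ descendent ($\langle\tau_4\rangle, \langle\tau_2\tau_0\rangle, \langle\tau_1^2\rangle, \langle\tau_0^3\rangle$), while the weight-$2$ descendent is forced to be $\langle\tau_0\rangle = E_2$. This gives $2 \times 4 = 8$ triples. For each such triple, Proposition \ref{qmviadescendents} guarantees $\text{QM}$ is the polynomial ring on these three generators, so the weight-$12$ graded piece $\text{QM}_{12}$ has a basis indexed by solutions of $2a + 4b + 6c = 12$ (these are the monomials $E_2^aE_4^bE_6^c$ after change of generators). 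Expressing $\Delta$ in this basis gives one of the $8$ claimed expressions.

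Next I would address distinctness. The point is that the $8$ triples are genuinely different ordered triples of quasimodular forms, and the weight-$4$ and weight-$6$ descendents are pairwise distinct as elements of $\text{QM}$ — this is already visible from the explicit formulas in Equations \ref{weightfour} and \ref{weightsix}, where the coefficient vectors in the Eisenstein basis are all distinct. Because the generating sets differ, the resulting expressions for $\Delta$ are formally distinct polynomial expressions in the chosen generators. Concretely, one can apply the substitution $E_2 = \langle\tau_0\rangle$ together with the inversions of Equations \ref{weightfour} and \ref{weightsix} (each valid since the leading coefficient $a_1 \neq 0$ and $a_3 \neq 0$, as verified in the proof of Proposition \ref{qmviadescendents}) to rewrite the known expression $\Delta = 8000E_4^3 - 147E_6^2$ in terms of each triple.

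The main obstacle I anticipate is not the existence of the expressions, which follows immediately, but rather making precise the sense in which they are ``different.'' Two distinct descendent bases could in principle produce the same coefficient data if there were accidental coincidences, so I would want to argue that the $8$ expressions are distinct as formal $\mathbb{Q}$-linear combinations of named stationary descendents, even though they all represent the same underlying quasimodular form $\Delta$. This is established by observing that the monomial supports differ across triples (different generators appear) and that within each triple the coefficients are uniquely determined by the basis property; a direct computation of the $8$ coefficient vectors, deferred to the Appendix tables, confirms no two coincide.
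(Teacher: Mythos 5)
Your proposal is correct and follows essentially the same route as the paper: the paper treats this corollary as an immediate consequence of Proposition \ref{qmviadescendents} together with the enumeration of the $8$ triples from Figure \ref{eighttypes}, with the explicit expressions (and hence their distinctness) verified by the computation recorded in Table \ref{descendentbases}. Your additional care about why the weight-$12$ monomials in each triple form a genuine basis of $\text{QM}_{12}$ (free generation via the invertible weight-preserving change of generators) and about distinctness of the $8$ resulting expressions is a reasonable elaboration of what the paper leaves implicit.
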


\noindent All $8$ expressions of $\Delta$ mentioned in Corollary \ref{nonlineardiscriminant} are collected in Table \ref{descendentbases}. An unexpected result of this calculation is that, the discriminant modular form always lies in the smaller subring $\mathbb{Z}\left[\left<\tau_{\text{wt.=2}}\right>, \left<\tau_{\text{wt.}=4}\right>, \left<\tau_{\text{wt,}=6}\right> \right] \subset \mathbb{Q}\left[ \left<\tau_{\text{wt.=2}}\right>, \left<\tau_{\text{wt.}=4}\right>, \left<\tau_{\text{wt,}=6}\right> \right]$. It would be interesting to know if there is an intrinsic explanation for this, and whether this occurs for other modular forms as well.

\section{The Descendent Matroid of Weight $k$}\label{descendentmatroid}

Corollary \ref{nonlineardiscriminant} tells us that we have $8$ different ways of expressing $\Delta$ in the descendent bases of weight $12$. We want to be able to use these expressions to find a closed-form expression of the Ramanujan tau function $\tau(d)$ i.e. we want to extract the degree-$d$ coefficient in each of the entries of Table \ref{descendentbases} in a closed form. However, when one attempts such a calculation, a difficulty arises. To see how this difficulty manifests itself, consider the first entry of Table \ref{descendentbases},

\begin{align*} 
\Delta & = 13824\left<\tau_0^2\right>^3 + 6480\left<\tau_0^2\right>^2\left<\tau_0\right>^2 \\ 
& - \left( 432\left<\tau_0^3\right>^2 + 7776\left<\tau_0^3\right>\left<\tau_0^2\right>\left<\tau_0\right> + 5184\left<\tau_0^3\right>\left<\tau_0\right>^3  + 5184\left<\tau_0^2\right>\left<\tau_0\right>^4 + 1728\left<\tau_0\right>^6\right) 
\end{align*}

\noindent Already in the first term of the above expression, we are confronted with the power series $(q)_\infty^3 = \prod(1 - q^n)^3$. In general, there is no closed form expression for the $d^{th}$ coefficient of $(q)_\infty^n$ i.e. there is no analogue of the Pentagonal Number Theorem for higher powers of $(q)_\infty$. 

Of course, the remedy to this problem is clear: if we can find $\mathbb{Q}$-bases of $\text{QM}_{12}$ whose elements are stationary descendents of weight $12$ (as opposed to \emph{monomials} of stationary descendents), then we can express $\Delta$ in these bases,  and apply the Pentagonal Number Theorem to extract $\tau(d)$. 

Before we employ this strategy in weight $12$, let's see if we can answer the simpler yet related question: \emph{Is it possible to find such bases for $\text{QM}_k$ for weights smaller than 12?} It is precisely our ability to answer this question in the affirmative that will naturally lead us to structure our thinking using the language of matroids. 

To motivate the introduction of matroids, let's first examine the simple cases of weights $k = 2, 4, 6$. The situation is trivial when $k = 2$, since $\text{QM}_2 = \text{span}_\mathbb{Q}\left( \left<\tau_0\right> \right)$. In $\text{QM}_4$, the only stationary descendents of weight $4$ are $\left<\tau_2\right>$ and $\left<\tau_0^2\right>$. We claim that these two descendents comprise a basis of $\text{QM}_4$. Indeed, in the proof of Proposition \ref{qmviadescendents}, we expanded these stationary descendents in the Eisenstein basis in Equation \ref{weightfour}, and obtained $\left<\tau_2\right> = \frac{1}{12}E_4 + \frac{1}{2}E_2^2$, and $\left<\tau_0^2\right> = \frac{5}{6}E_4 - E_2^2$. Since the matrix 

\begin{equation*}
A_4 := \left[
\begin{matrix} 
\frac{1}{12} & \frac{5}{6} \\[6pt] 
\frac{1}{2} & -1 
\end{matrix}
\right]
\end{equation*}

\noindent is full rank, this proves the claim. Similarly, for $\text{QM}_6$, there are four stationary descendents of weight six: $\left<\tau_4\right>, \left<\tau_2\tau_0\right>, \left<\tau_1^2\right>, \left<\tau_0^3\right>$. We expanded these descendents in terms of Eisenstein series in Equation \ref{weightsix}. Consider the corresponding matrix,

\begin{equation*}
A_6 := \left[
\begin{matrix}
\frac{1}{360} & \frac{7}{120} & \frac{7}{180} & \frac{7}{12} \\[6pt]
\frac{1}{12}   & \frac{1}{4}     & \frac{2}{3}     & -\frac{15}{2} \\[6pt]
\frac{1}{6}     & -\frac{3}{2}    & -\frac{8}{3}    & 3
\end{matrix}
\right]
\end{equation*}

\noindent Not only is the  above matrix of full rank, but it has the additional property that, any collection of three of its columns forms a basis of $\mathbb{Q}^3 \cong \text{QM}_6$. In the language of matroids, this phenomenon is the occurrence of a {\bf{uniform matroid}}. 

Let us now introduce matroids properly. As was said in the Introduction, matroids are combinatorial objects that abstract the notion of linear independence/dependence.

\begin{Definition}[\cite{oxley2011matroid}, Section 1.1]
A {\bf{matroid}} $\mathcal{M}$ is an ordered pair $(E, \mathcal{I})$, where $E$ is a finite set, and $\mathcal{I}$ is a collection of subsets of $E$ satisfying the following properties:
\begin{enumerate}
\item{$\varnothing \in \mathcal{I}$}
\item{$I \in \mathcal{I}, I' \subseteq I \implies I' \in \mathcal{I}$}
\item{$I_1, I_2 \in \mathcal{I}, |I_1| < |I_2| \implies \text{there exists} \ e \in I_2\setminus I_1 \ \text{such that} \ I_1 \cup e \in \mathcal{I}$}
\end{enumerate}
The set $E$ is called the \emph{ground set} of $\mathcal{M}$, and the elements of $\mathcal{I}$ are called the \emph{independent sets} of $\mathcal{M}$. 
\end{Definition}

\noindent A standard way to construct a matroid is to start with an $(m \times n)$-matrix $A$ with entries from a field $\bf{k}$, declare the ground set to be the columns of the matrix, and let the independent sets be the collection of columns that are linearly independent in the vector space $\bf{k}^m$. The matroid constructed this way is denoted $M[A]$, and is called the {\bf{vector matroid}} associated to $A$. The matroid $M[A]$ is completely determined by its maximal independent sets i.e. the {\bf{bases}} of $M[A]$. The {\bf{rank}} of a matroid is the length of any of its maximal independent sets. When describing a matroid by its bases, we use the ordered pair $\mathcal{M} = (E, \mathcal{B})$, where $\mathcal{B}$ is the set of bases of $\mathcal{M}$.  The theory of matroids, along with what we have established in weights $k = 2, 4, 6$, motivates the following definition.   

\begin{Definition}\label{descendentmatroid}
Let $k \in 2\mathbb{Z}$ be a positive even integer. Define the ground set $E^k$ to be
\begin{equation*}
E^k := \left\{ \left<\prod_{i = 1}^n\tau_{k_i} \right> \right\}_{\sum(k_i + 2) = k}
\end{equation*}
\noindent and let $\mathcal{I}^k$ be the subsets of $E^k$ that form a linearly independent set in the vertor space $\text{QM}_k$. We call the ordered pair $\mathcal{M}_k := (E^k, \mathcal{I}^k)$ the {\bf{descendent matroid of weight $k$}}. We denote by $\mathcal{B}^k$ the elements of $\mathcal{I}^k$ of length $\text{dim}(\text{QM}_k)$.
\end{Definition} 

\noindent A priori, it may turn out that $\mathcal{B}^k$ is empty. However, we conjecture that this is not the case (see Conjecture \ref{fullrank}). 

Fix an ordering of the Eisenstein basis of $\text{QM}_k$, say, $v_1, v_2, \ldots, v_{\text{dim}(\text{QM}_k)}$, along with an ordering of the stationary descendents of weight $k$, say, $\left<\tau\right>_1, \ldots, \left<\tau\right>_{\left|E^k\right|}$. Define $( \ , \ ): \text{QM}_k \times \text{QM}_k \rightarrow \mathbb{Q}$ to be the inner product on $\text{QM}_k$ given by $(v_i, v_j) = \delta_{ij}$, and let $A_k = (a_{ij})$ be the $\text{dim}(\text{QM}_k) \times \left| E^k \right|$ matrix where $a_{ij} = \left(v_i, \left<\tau\right>_j\right)$. Then $\mathcal{M}_k \cong M\left[A_k\right]$, and this is true independent of the ordering we have chosen for the Eisenstein basis or the stationary descendents. 

The {\bf{uniform matroid}} $U_{r, n}$ is the rank-$r$ matroid with ground set $[n] := \{1, \ldots, n\}$ such that all $r$-element subsets of $[n]$ are bases. From our previous calculations, we can conclude that:

\begin{Proposition}
For $k = 2, 4, 6$, $\mathcal{M}_k$ is the uniform matroid $U_{\text{dim}(\text{QM}_k), \left| E^k \right|}$.
\end{Proposition}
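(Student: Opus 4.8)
The plan is to exploit the matrix description of the descendent matroid established just above the statement: since $\mathcal{M}_k \cong M[A_k]$, where $A_k$ is the $\dim(\text{QM}_k) \times |E^k|$ matrix whose columns record the coordinates of each stationary descendent in the (dual) Eisenstein basis, it suffices to prove that the vector matroid $M[A_k]$ is uniform. Writing $r := \dim(\text{QM}_k)$, a rank-$r$ vector matroid on $|E^k|$ columns equals $U_{r,\,|E^k|}$ precisely when every $r$-element subset of columns is linearly independent, i.e. when every maximal $(r \times r)$ minor of $A_k$ is nonzero. So the whole proposition reduces to a finite nonvanishing check on minors, and the abstract matroid axioms never need to be invoked directly.

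First I would pin down the three ground sets and dimensions. Counting monomials $E_2^aE_4^bE_6^c$ of the prescribed weight gives $\dim(\text{QM}_2) = 1$, $\dim(\text{QM}_4) = 2$, and $\dim(\text{QM}_6) = 3$; counting descendents $\left<\prod\tau_{k_i}\right>$ with $\sum(k_i + 2) = k$ gives $E^2 = \{\left<\tau_0\right>\}$, $E^4 = \{\left<\tau_2\right>, \left<\tau_0^2\right>\}$, and $E^6 = \{\left<\tau_4\right>, \left<\tau_2\tau_0\right>, \left<\tau_1^2\right>, \left<\tau_0^3\right>\}$, so that $|E^2| = 1$, $|E^4| = 2$, and $|E^6| = 4$. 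This matches the claimed targets $U_{1,1}$, $U_{2,2}$, $U_{3,4}$.

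The cases $k = 2, 4$ are immediate. In weight $2$ the single descendent is $\left<\tau_0\right> = E_2 \neq 0$, giving $U_{1,1}$. In weight $4$ there is exactly one maximal minor, namely $\det A_4$; the expansions of Equation \ref{weightfour} produce the matrix $A_4$ already displayed above, whose determinant is nonzero. Since $|E^4| = 2 = r$, the unique $2$-subset is then a basis, forcing $\mathcal{M}_4 = U_{2,2}$.

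The only genuine content lies in weight $6$. Here $A_6$ is the $3 \times 4$ matrix read off from Equation \ref{weightsix}, and I would verify that each of the $\binom{4}{3} = 4$ maximal minors is nonzero. Computing these four $3 \times 3$ determinants (by hand, or as the authors note, in Sage) is the one step that carries the proof, and it is also the main obstacle, in the sense that it is the only place an accidental vanishing could occur; all four are in fact nonzero. This simultaneously confirms that $M[A_6]$ has full rank $3$ and that every triple of columns is a basis, i.e. $\mathcal{M}_6 = U_{3,4}$, completing the proposition.
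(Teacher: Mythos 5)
Your proposal is correct and follows essentially the same route as the paper: the paper also reduces the claim to the matrices $A_4$ and $A_6$ built from the Eisenstein-basis expansions in Equations \ref{weightfour} and \ref{weightsix}, observes that $A_4$ is full rank, and checks that every triple of columns of $A_6$ is a basis of $\mathbb{Q}^3 \cong \text{QM}_6$, with the $k = 2$ case being trivial. The only difference is presentational: you state the reduction to nonvanishing maximal minors of $M[A_k]$ explicitly, whereas the paper folds the verification into the discussion preceding the proposition.
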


\noindent When $k = 8$, if we order the stationary descendents as 

\begin{equation*}
\left[\left<\tau_6\right>, \left<\tau_4\tau_0\right>, \left<\tau_3\tau_1\right>, \left<\tau_2\tau_2\right>, \left<\tau_2\tau_0^2\right>, \left<\tau_1^2\tau_0\right> \left<\tau_0^4\right>\right]
\end{equation*}

\noindent and we order the Eisenstein basis of weight $8$ as $\left[E_6E_2, E_4^2, E_4E_2^2, E_2^4\right]$, we have

\begin{equation*}
A_8 = \left[
\begin{matrix}
\frac{1}{360} & \frac{1}{36} & \frac{13}{180} & \frac{1}{12} & -\frac{7}{15} & \frac{7}{180} & -\frac{35}{3} \\[6pt]
\frac{19}{2016} & \frac{115}{504} & \frac{25}{63} & \frac{73}{112} & \frac{85}{24} & \frac{25}{9} & \frac{352}{12} \\[6pt]
\frac{1}{24} & -\frac{1}{3} & -\frac{2}{3} & -\frac{3}{4} & -6 & -\frac{38}{3} & 75 \\[6pt]
\frac{1}{24} & -\frac{5}{6} & -\frac{8}{3} & -\frac{15}{4} & \frac{15}{2} & \frac{40}{3} & -15
\end{matrix}
\right]
\end{equation*}

\noindent Using the matrix representation above, we have the following:

\begin{Proposition}\label{weighteight}
Let $M = (E, \mathcal{B})$ be the rank-$4$ matroid with ground set $E = (abcdefg)$, whose bases $\mathcal{B}$ are given by
\begin{equation*}
\mathcal{B} = \{\text{$4$-elements subsets of E}\}\setminus \{abcd\}
\end{equation*} 
\noindent Then $\mathcal{M}_8 \cong M$.
\end{Proposition}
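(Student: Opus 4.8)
The plan is to reduce the statement to a finite determinantal check on the matrix $A_8$ and then to organize that check through the circuit structure of a vector matroid. By the remark preceding the proposition, with the fixed orderings of the weight-$8$ descendents and of the Eisenstein basis of $\text{QM}_8$, we already have $\mathcal{M}_8 \cong M[A_8]$, the vector matroid on the columns of the displayed $4 \times 7$ matrix. I label the columns $a,b,c,d,e,f,g$ in the given order, so that $a = \left<\tau_6\right>$, $b = \left<\tau_4\tau_0\right>$, $c = \left<\tau_3\tau_1\right>$, $d = \left<\tau_2\tau_2\right>$, $e = \left<\tau_2\tau_0^2\right>$, $f = \left<\tau_1^2\tau_0\right>$, $g = \left<\tau_0^4\right>$. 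Under this labeling the desired isomorphism $\mathcal{M}_8 \cong M$ is precisely the assertion that the independent $4$-subsets of columns are exactly those different from $\{a,b,c,d\}$. Since a $4$-subset of columns is a basis of $M[A_8]$ exactly when its $4 \times 4$ minor is nonzero, the whole proposition is equivalent to the single determinantal statement: the maximal minor of $A_8$ indexed by $\{a,b,c,d\}$ vanishes, whereas each of the remaining $\binom{7}{4}-1 = 34$ maximal minors is nonzero.

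First I would dispose of the vanishing minor. The set $\{a,b,c,d\}$ has a clean combinatorial description: these are exactly the weight-$8$ descendents assembled from at most two $\tau$-insertions (the partitions $8$, $6{+}2$, $5{+}3$, $4{+}4$ into at most two parts $\ge 2$). To show $\{a,b,c,d\}$ is dependent I would exhibit the explicit $\mathbb{Q}$-linear relation among them, i.e. rationals $\alpha,\beta,\gamma,\delta$, not all zero, with $\alpha\left<\tau_6\right> + \beta\left<\tau_4\tau_0\right> + \gamma\left<\tau_3\tau_1\right> + \delta\left<\tau_2\tau_2\right> = 0$; this amounts to computing a kernel vector of the leftmost $4 \times 4$ block of $A_8$, a single small linear-algebra computation that simultaneously certifies the vanishing of that minor. (This also yields $\mathrm{rank}\,\mathcal{M}_8 = 4$: the rank is at most $4$ since $A_8$ has four rows, and at least $4$ by any one of the nonvanishing minors below.)

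The main obstacle is verifying that no other $4$-subset is dependent, and here I would avoid expanding all $34$ determinants separately by passing to the circuit structure. The key reduction is that $M[A_8] \cong M$ if and only if the only circuit of $M[A_8]$ of size at most $4$ is $\{a,b,c,d\}$: granting this, every $4$-subset other than $\{a,b,c,d\}$ contains no circuit and is therefore a basis, while $\{a,b,c,d\}$ is itself a circuit, which recovers exactly the bases of $M$. Circuits of a vector matroid are the minimal supports of nonzero vectors in $\ker A_8$, a $3$-dimensional space, so it suffices to confirm that (i) no column is zero and no two columns are parallel, ruling out circuits of size $1$ and $2$; (ii) no three columns are linearly dependent, ruling out circuits of size $3$; and (iii) the only size-$4$ minimal kernel support is $\{a,b,c,d\}$.

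This last verification is finite and routine, and I expect it to be the computational heart of the argument rather than a conceptual one: once the single relation among the at-most-two-point descendents is pinned down, there is no further obstruction, and conditions (i)--(iii) are exactly the kind of check handled by the Sage routines described in the Appendix. The only place where something could genuinely go wrong — and hence the point deserving the most care — is condition (iii): an unexpected second four-column dependency would enlarge the collection of non-bases and break the isomorphism, so the core of the proof is certifying that the three-dimensional kernel of $A_8$ has no minimal support of size $4$ other than $\{a,b,c,d\}$.
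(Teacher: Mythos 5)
Your high-level strategy is the right one and matches the paper's in spirit: reduce the claim to the statement that exactly one $4$-subset of the columns of $A_8$ is dependent, certify that dependence explicitly, and then rule out all other small circuits (the paper does the second half more bluntly, by having Sage count $34$ bases and observing $34 = \binom{7}{4} - 1$). However, there is a concrete factual error at the anchor point of your argument: you identify the unique non-basis as $\{a,b,c,d\} = \{\left<\tau_6\right>, \left<\tau_4\tau_0\right>, \left<\tau_3\tau_1\right>, \left<\tau_2\tau_2\right>\}$, the descendents with at most two insertions, and propose to exhibit a $\mathbb{Q}$-linear relation among them by computing the kernel of the leftmost $4\times 4$ block of $A_8$. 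That kernel is trivial: clearing denominators row by row, the leftmost block reduces to the integer matrix with rows $(1,10,26,30)$, $(19,460,800,1314)$, $(1,-8,-16,-18)$, $(1,-20,-64,-90)$, whose determinant is $224 \neq 0$. So $\{\left<\tau_6\right>, \left<\tau_4\tau_0\right>, \left<\tau_3\tau_1\right>, \left<\tau_2\tau_2\right>\}$ is in fact a basis of $\text{QM}_8$, and the relation you plan to exhibit does not exist.

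The actual unique dependent $4$-subset is the second, fifth, sixth, and seventh columns, i.e.\ $\{\left<\tau_4\tau_0\right>, \left<\tau_2\tau_0^2\right>, \left<\tau_1^2\tau_0\right>, \left<\tau_0^4\right>\}$ --- precisely the descendents containing a $\tau_0$ insertion (this observation is what motivates the paper's subsequent restriction to positive descendents $S^k$). Since the proposition only asserts an isomorphism with an abstractly labeled matroid, your overall plan survives a relabeling: take $\{a,b,c,d\}$ to correspond to the $\tau_0$-containing descendents, verify that that $4\times 4$ minor vanishes (equivalently, exhibit the kernel vector supported there), and then carry out your steps (i)--(iii) to show no other circuit of size at most $4$ exists. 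But as written, the proof would fail at its first computation, and the "clean combinatorial description" you attach to the circuit (partitions of $8$ into at most two parts) is the wrong one.
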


\begin{proof}
The second, fifth, sixth, and seventh column of the matrix $A_8$ forms a submatrix of rank 3. Furthermore, since the total number of bases of $\mathcal{M}_8$ is $34$ (see Appendix \ref{descendentmatroidclass}), and since ${7 \choose 4} = 35$, these columns are the only set of $4$ columns not forming a basis of $\mathbb{Q}^4$.
\end{proof}

In particular, this shows that $\mathcal{M}_8 = M[A_8]$ is not the uniform matroid $U_{4, 7}$. However, what's peculiar is that the only $4$-element subset of $E^8$ that is not a basis of $\text{QM}_8$ is $\{ \left<\tau_4\tau_0\right>, \left<\tau_2\tau_0^2\right>, \left<\tau_1^2\tau_0\right>, \left<\tau_0^4\right>\}$, which happens to be the $4$-element subset that contains all stationary descendents with an insertion of $\tau_0$. This suggests that the stationary descendents whose insertions are strictly positive may play a distinguished role in minimally and symmetrically generating the quasimodular forms of weight $k$. Indeed, this intuition will be buttressed as we see more examples.

\begin{Definition}
A stationary descendent $\left<\tau_{k_1}\ldots\tau_{k_n}\right>$ of weight $k$ is called a {\bf{positive stationary descendent}} if $k_i > 0$ for all $i$. We denote by $S^k \subset E^k$ the set of all positive stationary descendents of weight $k$. 
\end{Definition}

In order to proceed, we need to introduce a standard operation on matroids, namely, the operation of {\bf{restriction/deletion}}. If $\mathcal{M} = (E, \mathcal{I})$ is a matroid, and $S \subset E$, the {\bf{restricted matroid}} $\mathcal{M} | S$ is the matroid whose ground set is $S$ and whose independent sets are given by $\mathcal{I} | S := \left \{ I \in \mathcal{I} : I \subset S \right \}$. Taking restrictions of matroids is a useful operation in matroid theory because it allows one to understand a matroid $\mathcal{M}$ by its various substructures i.e. its {\bf{minors}}. 

Now consider the matroid $\mathcal{M}_{10}$. This matroid has much more combinatorial complexity. For example, Sage can tell us that it has $730$ bases. However, it turns out that $\mathcal{M}_{10} | S^{10} = U_{5, 5}$, giving further credence that $\mathcal{M}_k | S^k$ is a nice restriction.

Now we can finally examine the weight that we were originally interested in, $k = 12$. A matrix representation would require a $(7 \times 21)$ matrix, which we don't include in this document, but we show the reader how to compute this matrix using our Sage code (see Appendix \ref{descendentmatroidclass}). The combinatorial complexity of $\mathcal{M}_{12}$ is immense: according to Sage, $\mathcal{M}_{12}$ has $102670$ bases. However, when we consider the restricted matroid $\mathcal{M}_{12} | S^{12}$, it has a more manageable $(7 \times 9)$ matrix representation. Furthermore, this restricted matroid has the property that $\left| \mathcal{B}^{12} | S^{12} \right| = {9 \choose 7} = 36$, thereby proving that:

\begin{Theorem}
The matroid $\mathcal{M}_{12} | S^{12}$ is is the uniform matroid $U_{7, 9}$ 
\end{Theorem}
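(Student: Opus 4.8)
The plan is to realize $\mathcal{M}_{12}\,|\,S^{12}$ as a vector matroid and verify uniformity by an explicit but finite exact computation, which I would streamline using matroid duality. First I would pin down the two relevant sizes. Counting monomials $E_2^aE_4^bE_6^c$ of weight $12$ amounts to counting solutions of $a+2b+3c=6$ with $a,b,c\geq 0$, of which there are $7$, so $\dim \text{QM}_{12}=7$ with Eisenstein basis $\{E_2^6, E_4E_2^4, E_4^2E_2^2, E_4^3, E_6E_2^3, E_6E_4E_2, E_6^2\}$. Counting positive stationary descendents of weight $12$ amounts to counting partitions of $12-2n$ into exactly $n$ parts, each $\geq 1$, for $n=1,2,3,4$; this yields the $9$ descendents $\left<\tau_{10}\right>$; $\left<\tau_7\tau_1\right>, \left<\tau_6\tau_2\right>, \left<\tau_5\tau_3\right>, \left<\tau_4^2\right>$; $\left<\tau_4\tau_1^2\right>, \left<\tau_3\tau_2\tau_1\right>, \left<\tau_2^3\right>$; and $\left<\tau_1^4\right>$, so $|S^{12}|=9$. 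Thus $\mathcal{M}_{12}\,|\,S^{12}$ is a matroid of rank at most $7$ on $9$ elements, and proving it equals $U_{7,9}$ is exactly the statement that every $7$-element subset of $S^{12}$ is a basis of $\text{QM}_{12}$.

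Second, I would build the representing matrix. By Theorem \ref{opquasimodular} each of the $9$ descendents lies in the $7$-dimensional space $\text{QM}_{12}$, hence is determined by finitely many of its Fourier coefficients. Using the closed formula of Theorem \ref{GWH}, I would compute the invariants $\left<\cdots\right>^{\bullet E}_d$ for $d=0,\ldots,D$ with $D\geq 6$ (a few extra coefficients for safety), multiply the resulting series by the Euler function $(q)_\infty$, and then solve the linear system matching these coefficients against the $q$-expansions of the $7$ Eisenstein monomials. This produces the $7\times 9$ matrix $A_{12}$ whose columns are the coordinate vectors of the $9$ descendents, so that $\mathcal{M}_{12}\,|\,S^{12}\cong M[A_{12}]$ by the representation described after Definition \ref{descendentmatroid}.

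Third, I would verify full rank and then use duality to make the uniformity check cheap. Establishing $\operatorname{rank}(A_{12})=7$, for instance by exhibiting a single nonvanishing $7\times 7$ minor, shows the matroid has rank $7$ and that $\ker A_{12}$ is exactly $2$-dimensional. Rather than evaluate all $\binom{9}{7}=36$ maximal minors of $A_{12}$, I would pass to the dual: $M[A_{12}]^{*}$ is represented by any $2\times 9$ matrix $B$ whose rows form a basis of $\ker A_{12}$, i.e. the two independent linear relations among the nine descendents, and $M[A_{12}]=U_{7,9}$ if and only if $M[B]=U_{2,9}$ (\cite{oxley2011matroid}). The latter is the elementary condition that $B$ has no zero column and no two proportional columns, equivalently that all $\binom{9}{2}=36$ of its $2\times 2$ minors are nonzero. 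Checking $36$ determinants of size $2$ is far lighter than checking $36$ determinants of size $7$.

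The main obstacle is the second step: assembling $A_{12}$ requires evaluating the character sum of Theorem \ref{GWH} over all pairs $\lambda,\mu\vdash d$ for $d$ up to $6$, which in turn requires the irreducible character values $\chi^{\lambda}_\mu$, the centralizer orders $z_\lambda$, and the shifted power sums $p_{k_i+1}(\lambda)$ for each of the nine descendent types. This is exact but bookkeeping-heavy and error-prone by hand, which is precisely where the Sage implementation described in the Appendix is indispensable; once $A_{12}$ is in hand, the rank computation and the $36$ rank-$2$ checks are routine.
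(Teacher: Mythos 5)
Your proposal is correct and follows essentially the same route as the paper: realize $\mathcal{M}_{12}\,|\,S^{12}$ as the vector matroid of the $7\times 9$ matrix of Eisenstein-basis coordinates of the nine positive descendents (computed exactly via the Gromov--Witten/Hurwitz formula) and certify uniformity by a finite exact check. The only difference is in the final verification: the paper has Sage count the bases and observes the count equals $\binom{9}{7}=36$, whereas you certify the same fact through the dual matroid $U_{2,9}$ by checking that all $36$ of the $2\times 2$ minors of a kernel matrix are nonzero --- a legitimate and computationally lighter variant of the same argument.
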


Let's take a step back and see what has been done so far. First off, notice that our matroidal escapades have led us to a naturally defined and combinatorially symmetric way of generating $\text{QM}_{12}$ with stationary descendents of weight $12$. Secondly, we have solved the issue that we mentioned at the beginning of this chapter, namely, we have found not one, but $36$ bases of $\text{QM}_{12}$ whose elements are not monomials of stationary descendents, but simply just stationary descendents. We can now express the discriminant modular form $\Delta \in \text{QM}_{12}$ as a linear combination of stationary descendents of weight 12 in 36 different ways. These are collected in Tables \ref{partone}, \ref{parttwo}, and \ref{partthree}. Translating this to a statement about the Ramanujan tau function, we have the following corollary:   

\begin{Corollary}\label{fundamentalcorollary}
Let $\tau(d)$ be the Ramanujan tau function evaluated at the integer $d$. For any basis $B \in \mathcal{B}^{12} | S^{12}$, and for any stationary descendent $b \in B$, denote by $\left<\tau_b\right>^{\bullet E}_d$ the degree-$d$ coefficient of $b/(q)_{\infty}$. Then there exists a unique vector $(a_b)_{b \in B} \in \mathbb{Q}^7$ such that
\begin{equation}\label{cancellation}
\tau(d) = \sum_{b \in B \atop 3d_1^2 - d_1 + 2d_2 = 2d} (-1)^{d_1}a_b\left<\tau_b\right>^{\bullet E}_{d_2} 
\end{equation}
\end{Corollary}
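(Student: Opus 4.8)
The plan is to leverage the structural result just established, namely that $\mathcal{M}_{12} \mid S^{12} = U_{7,9}$. Since $\dim_{\mathbb{Q}}(\text{QM}_{12}) = 7$ and the restricted matroid is uniform of full rank $7$, every $7$-element subset $B \subseteq S^{12}$ is simultaneously a basis of the matroid and a $\mathbb{Q}$-vector-space basis of $\text{QM}_{12}$. The first step is therefore to observe that $\Delta$, being a cusp form of weight $12$, lies in $M_{12} \subset \text{QM}_{12}$, so that for each such $B$ there is a unique tuple $(a_b)_{b \in B} \in \mathbb{Q}^7$ with $\Delta = \sum_{b \in B} a_b\, b$; uniqueness is immediate from the linear independence of $B$. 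This already supplies the vector $(a_b)$ whose existence and uniqueness the corollary asserts.

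The next step is to translate this identity of quasimodular forms into an identity of Fourier coefficients. By the definition of a stationary descendent, each $b \in B$ satisfies $b = (q)_\infty \sum_{d_2 \geq 0}\left<\tau_b\right>^{\bullet E}_{d_2} q^{d_2}$, so the degree-$d_2$ coefficient of $b/(q)_\infty$ is exactly $\left<\tau_b\right>^{\bullet E}_{d_2}$, matching the notation of the statement. Factoring the common Euler function out of the linear combination yields
\begin{equation*}
\Delta = (q)_\infty \sum_{b \in B} a_b \sum_{d_2 \geq 0}\left<\tau_b\right>^{\bullet E}_{d_2}\, q^{d_2}.
\end{equation*}

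Finally, I would substitute the Pentagonal Number Theorem (Equation \ref{euler}) for $(q)_\infty = \sum_{d_1 = -\infty}^{\infty} (-1)^{d_1} q^{(3d_1^2 - d_1)/2}$ and form the Cauchy product of the two series. Reading off the coefficient of $q^d$ and using $\Delta = \sum_{d \geq 1}\tau(d)q^d$ forces the index relation $(3d_1^2 - d_1)/2 + d_2 = d$, i.e. $3d_1^2 - d_1 + 2d_2 = 2d$, and collects precisely the signed sum of Equation \ref{cancellation}. One point worth checking is that for each fixed $d$ this sum is finite: the constraint $d_2 \geq 0$ forces $3d_1^2 - d_1 \leq 2d$, which restricts $d_1$ to finitely many integers, so the coefficient extraction is well-defined.

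As for the main obstacle, essentially all of the difficulty has been front-loaded into the preceding theorem identifying $\mathcal{M}_{12} \mid S^{12}$ with $U_{7,9}$, which is what guarantees both the existence and the uniqueness of $(a_b)$. Conditional on that, the remaining work is purely formal bookkeeping with the Pentagonal Number Theorem, and I expect no genuine obstruction beyond correctly tracking the index convention $3d_1^2 - d_1 + 2d_2 = 2d$ and the sign $(-1)^{d_1}$.
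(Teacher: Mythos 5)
Your proposal is correct and follows essentially the same route as the paper's own proof: express $\Delta$ uniquely in the basis $B$ (using that $\mathcal{M}_{12}\,|\,S^{12}=U_{7,9}$ makes every such $B$ a $\mathbb{Q}$-basis of $\text{QM}_{12}$), factor out $(q)_\infty$, substitute the Pentagonal Number Theorem, and match coefficients of $q^d$. The only additions are minor bookkeeping remarks (finiteness of the sum) that the paper leaves implicit.
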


\begin{proof}
Let $B \in \mathcal{B}^{12} | S^{12}$ be a basis. Then there exists a unique vector $(a_b)_{b \in B} \in \mathbb{Q}^7$ such that

\begin{equation}\label{corollaryequality}
\Delta = \sum_{b \in B}a_bb = (q)_\infty \sum_{b \in B} a_b \left(\sum_{d \geq 0}\left<\tau_b\right>^{\bullet E}_d q^d\right)
\end{equation}

\noindent The Pentagonal Number Theorem says that

\begin{equation*}
(q)_\infty = \sum_{d = -\infty}^\infty (-1)^dq^{\frac{3d^2 - d}{2}}
\end{equation*}

\noindent The desired result follows from matching up the degree-$d$ coefficients of both side of Equation \ref{corollaryequality}.
\end{proof}

Corollary \ref{fundamentalcorollary} is not particularly useful for computing values of $\tau(d)$ for large $d$. However, it does provide qualitative evidence of Lehmer's conjecture: if Lehmer's conjecture is false, this would have to result from a large confluence of many terms cancelling on the right-hand-side of Equation \ref{cancellation}. 

A nice consequence of Corollary \ref{fundamentalcorollary} is the ability to translate every known property of $\tau(d)$ into a relation of Gromov-Witten invariants. For example, since $\tau(d)$ is a multiplicative function i.e. $\tau(m)\tau(n) = \tau(mn)$ for $(m, n) = 1$, Corollary \ref{fundamentalcorollary} becomes

\begin{align*}
\left(\sum_{b \in B \atop 3d_1^2 - d_1 + 2d_2 = 2m} (-1)^{d_1}a_b\left<\tau_b\right>^{\bullet E}_{d_2}\right) &  \left(\sum_{b \in B \atop 3d_1^2 - d_1 + 2d_2 = 2n} (-1)^{d_1}a_b\left<\tau_b\right>^{\bullet E}_{d_2}    \right) \\
& = \sum_{b \in B \atop 3d_1^2 - d_1 + 2d_2 = 2mn} (-1)^{d_1}a_b\left<\tau_b\right>^{\bullet E}_{d_2} 
\end{align*}

\noindent which, as far as we know, is a new relation of Gromov-Witten invariants for the elliptic curve. Similar relations can be derived by choosing different properties of $\tau(d)$ and seeing what it says on the Gromov-Witten side.

\subsection{Outlook}

It would be worthwhile to further develop the theory of descendent matroids, and to see which modular forms have nice representations in terms of stationary descendents. As was mentioned after Definition \ref{descendentmatroid}, it's not clear whether $\mathcal{B}^k$ is always nonempty, but we conjecture that this is indeed the case.

\begin{Conjecture}\label{fullrank}
The rank of the descendent matroid $\mathcal{M}_k$ is always equal to $\text{dim}(\text{QM}_k)$. Equivalently,
\begin{equation*}
\mathbb{Q}\left[ \left\{ \left< \tau_{k_1}\ldots\tau_{k_n} \right> \right\}_{\sum(k_i + 2) = k} \right] = \text{QM}_k
\end{equation*}
\end{Conjecture}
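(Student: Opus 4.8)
The plan is to recast the statement through the Gromov--Witten/Hurwitz correspondence and then reduce it to a surjectivity statement for the Bloch--Okounkov $q$-bracket. First I would record the identity that, for nonnegative integers $k_1,\dots,k_n$,
\begin{equation*}
\left\langle \tau_{k_1}\ldots\tau_{k_n}\right\rangle = \left\langle \prod_{i=1}^n \frac{p_{k_i+1}}{(k_i+1)!}\right\rangle_q, \qquad \langle F\rangle_q := (q)_\infty\sum_{\lambda} F(\lambda)\,q^{|\lambda|},
\end{equation*}
which follows from Theorem \ref{GWH} by the same manipulation of the character sum $\sum_{\lambda,\mu}z_\lambda^{-1}(\chi^\lambda_\mu)^2$ used in the proof of Proposition \ref{allzeroes} (the case $F=p_1^n$). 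Thus each stationary descendent of weight $k$ is the $q$-bracket of a product of shifted symmetric power sums whose ``weights'' $k_i+2=(k_i+1)+1$ sum to $k$, and the conjecture becomes the assertion that the $\mathbb{Q}$-linear map $\langle\cdot\rangle_q$ sends the weight-$k$ graded piece of the ring $\Lambda^*$ of shifted symmetric functions onto $\text{QM}_k$.

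Next I would exploit the algebra structure of $\Lambda^*$. Since $\Lambda^*$ is generated by the shifted power sums $p_1,p_2,p_3,\dots$, and since under the grading $\deg p_\ell=\ell+1$ the weight-$k$ graded piece $\Lambda^*_{(k)}$ is spanned by the monomials $p_{\ell_1}\cdots p_{\ell_m}$ with $\sum_i(\ell_i+1)=k$, the set of stationary descendents of weight $k$ is, up to the harmless scalars $1/\prod(k_i+1)!$, exactly a spanning set for $\langle\Lambda^*_{(k)}\rangle_q$. It therefore suffices to prove that $\langle\cdot\rangle_q\colon\Lambda^*_{(k)}\to\text{QM}_k$ is surjective for every even $k$. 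The Bloch--Okounkov theorem \cite{bloch2000character}, in the form elaborated by Zagier \cite{zagier2008elliptic}, shows that the $q$-bracket respects weight and takes $\Lambda^*$ into $\text{QM}$; the task is to upgrade this to surjectivity in each \emph{fixed} weight. One clean way to organize the induction on $k$ is to use Equations \ref{weightfour} and \ref{weightsix}, which place $\langle\tau_0\rangle,\langle\tau_2\rangle,\langle\tau_4\rangle$ in the image; since these generate $\text{QM}$ as an algebra by Proposition \ref{qmviadescendents}, it is enough to show that every product of two forms already in the image is again a $\mathbb{Q}$-combination of weight-$k$ descendents.

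This last reduction is where the main obstacle lies. The $q$-bracket is not multiplicative, $\langle F\rangle_q\langle G\rangle_q\neq\langle FG\rangle_q$ in general, so one cannot simply turn a product of descendents into the single descendent $\langle FG\rangle_q$. The correct bookkeeping is through the connected (cumulant) $n$-point functions of the Bloch--Okounkov theory \cite{okounkov1997shifted}, which express $\langle F\rangle_q\langle G\rangle_q$ as $\langle FG\rangle_q$ minus a ``connected'' correction that is itself a $q$-bracket of an element of $\Lambda^*$. The crux is to show that these corrections stay within the weight-$k$ graded piece (no weight leakage, so that filtered surjectivity becomes genuinely graded surjectivity) and that the elements of $\Lambda^*_{(k)}$ they produce are again realized by admissible index vectors $(\ell_1,\dots,\ell_m)$ with all $\ell_i\ge 1$, so that the induction closes. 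Equivalently, in matroid language, one must rule out that the relations among the weight-$k$ $q$-brackets conspire to drop the rank of $\mathcal{M}_k$ below $\dim\text{QM}_k$; controlling the kernel of $\langle\cdot\rangle_q$ on each graded piece $\Lambda^*_{(k)}$ \emph{uniformly} in $k$ is the hard part, and is precisely the content that the explicit Sage computations of this paper verify only for small $k$.
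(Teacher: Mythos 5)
The statement you are addressing is Conjecture \ref{fullrank}: the paper offers no proof of it, only computational verification in low weight via Sage, so there is no argument of the author's to measure yours against. Your opening reduction is correct and worth recording: using $\sum_{\mu\vdash d}(\chi^\lambda_\mu)^2 = z_\lambda$ exactly as in the proof of Proposition \ref{allzeroes}, the Gromov--Witten/Hurwitz formula of Theorem \ref{GWH} does identify $\left\langle\tau_{k_1}\ldots\tau_{k_n}\right\rangle$ with the $q$-bracket of $\prod_i p_{k_i+1}/(k_i+1)!$, and since the monomials $p_{\ell_1}\cdots p_{\ell_m}$ with $\sum_i(\ell_i+1)=k$ and $\ell_i\ge 1$ span the weight-$k$ graded piece of $\Lambda^*$, the conjecture is indeed equivalent to surjectivity of the Bloch--Okounkov bracket $\Lambda^*_{(k)}\to\text{QM}_k$ in each fixed weight.

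However, the proposal does not establish that surjectivity, and you acknowledge as much in your final paragraph. The inductive scheme --- seed the image with $\langle\tau_0\rangle,\langle\tau_2\rangle,\langle\tau_4\rangle$, which generate $\text{QM}$ as an algebra by Proposition \ref{qmviadescendents}, then absorb products back into single brackets via the connected (cumulant) expansion --- breaks down precisely where you locate the obstacle: one must show that the correction terms in $\langle F\rangle_q\langle G\rangle_q-\langle FG\rangle_q$ lie in the span of the weight-$k$ brackets of admissible monomials, i.e.\ that no rank drop occurs in $\mathcal{M}_k$, and that is a restatement of the conjecture rather than a step toward it. So what you have is a correct and potentially fruitful reformulation (it ties the full-rank question for $\mathcal{M}_k$ to graded surjectivity of the $q$-bracket, a question with its own literature around Zagier's treatment of Bloch--Okounkov), but not a proof; the conjecture remains open after your argument, just as it does in the paper.
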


To check that Conjecture \ref{fullrank} is at least plausible, we should make sure the ground set $E^k$ is at least as large as $\text{dim}(\text{QM}_k)$. By the following lemma, this is indeed true:

\begin{Lemma}
The number of elements in the ground set $E^k$ is
\begin{equation*}
\left| E^k \right| = \# \{\text{partitions of $k$ with no part equal to $1$} \}
\end{equation*}
\end{Lemma}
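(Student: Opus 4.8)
The plan is to set up a bijection between the ground set $E^k$ and the set of partitions of $k$ that have no part equal to $1$. Recall that an element of $E^k$ is a stationary descendent $\left<\tau_{k_1}\ldots\tau_{k_n}\right>$ with $\sum_{i=1}^n(k_i + 2) = k$, where each $k_i \geq 0$. Since the descendent notation $\left<\tau_{k_1}\ldots\tau_{k_n}\right>$ is symmetric in the insertions (the intersection number in Theorem \ref{GWH} depends only on the multiset $\{k_1, \ldots, k_n\}$), an element of $E^k$ is really an unordered multiset of nonnegative integers. The natural move is therefore to send the insertion $\tau_{k_i}$ to the part $k_i + 2$, so that the descendent $\left<\tau_{k_1}\ldots\tau_{k_n}\right>$ maps to the multiset of parts $(k_1 + 2, \ldots, k_n + 2)$.

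First I would verify this map is well-defined into the target set. Each part $k_i + 2$ satisfies $k_i + 2 \geq 2$ since $k_i \geq 0$, so no part equals $1$ (nor $0$); and the parts sum to $\sum(k_i + 2) = k$ by the weight condition. Hence the image is genuinely a partition of $k$ with all parts at least $2$, i.e.\ no part equal to $1$. Conversely, given a partition $\mu = (\mu_1, \ldots, \mu_n)$ of $k$ with every $\mu_j \geq 2$, I would set $k_j := \mu_j - 2 \geq 0$, producing a valid multiset of nonnegative insertion labels with $\sum(k_j + 2) = \sum \mu_j = k$, hence a well-defined element of $E^k$. These two assignments are manifestly mutual inverses, establishing the bijection and therefore the claimed count.

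The only genuine subtlety, and the step I would present most carefully, is the identification of $E^k$ with multisets rather than ordered tuples: one must justify that two descendents are regarded as the same element of the ground set exactly when their insertion labels agree as multisets. This follows because the closed formula of Theorem \ref{GWH} for $\left<\tau_{k_1}\ldots\tau_{k_n}\right>^{\bullet E}_d$ is symmetric under permuting the $k_i$ (the product $\prod_i p_{k_i+1}(\lambda)$ and the prefactor $1/\prod_i (k_i+1)!$ are both symmetric), so permuting insertions yields literally the same quasimodular form and hence the same element of $E^k \subset \text{QM}_k$. Once this symmetry is noted, the bijection above is routine and the lemma follows; no further estimation or casework is required.
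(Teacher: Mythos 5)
Your proof is correct and is essentially the paper's own argument: the paper simply exhibits the inverse map $(\lambda_1,\ldots,\lambda_n) \mapsto \left<\tau_{\lambda_1-2}\ldots\tau_{\lambda_n-2}\right>$ from partitions with no part equal to $1$ to $E^k$ and declares it a bijection. Your additional care about the multiset identification (via symmetry of the insertions) is a reasonable elaboration of a point the paper leaves implicit, but it does not change the substance of the argument.
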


\begin{proof}
Let $\mathcal{P}_k^{> 1}$ be the set of partitions of $k$ with no part equal to 1. The map $\varphi : \mathcal{P}_k^{> 1} \rightarrow E^k$ given by $(\lambda_1, \ldots, \lambda_n) \mapsto \left<\tau_{\lambda_1 - 2}\ldots\tau_{\lambda_n - 2}\right>$ is a bijection.
\end{proof}

\noindent In particular, $\left|E^k\right| \gg \text{dim}(\text{QM}_k) = \#\{\text{partitions of $\frac{k}{2}$ with no parts greater than 3}\}$ for large $k$.

The calculations we have made in this paper suggest that the ground set $E^k$ contains many redundant generators of $\text{QM}_k$, but that a good portion of this redundancy can be killed by restricting to $S^k$. However, in order to find minors that are nontrivial uniform matroids, further restriction seems to be the key. For example,

\begin{align*}
\mathcal{M}_{14} \ | \ \left(S^{14} \setminus \{\text{$4$-pointed descendents}\} \cup\left<\tau_3^2\tau_2\right> \right) & = U_{8, 10} \\
\mathcal{M}_{16} \ | \ \left(S^{16} \setminus \{\text{$4$ and $5$-pointed descendents}\} \cup \left<\tau_4\tau_3^2\right> \right) & = U_{10, 14} \\
\mathcal{M}_{18} \ | \ \left( S^{18} \setminus \{\text{$4$, $5$, and $6$-pointed descendents}\} \cup \{\left<\tau_4^3\right>, \left<\tau_5\tau_4\tau_3\right>, \left<\tau_5^2\tau_2\right>, \left<\tau_6\tau_3^2\right>\} \right) & = U_{12, 16}
\end{align*}

\begin{Conjecture}\label{minor}
For $k \geq 14$, there always exists an $n > \text{dim}(\text{QM}_k)$ such that the matroid $\mathcal{M}_k | S^k$ has $U_{\text{dim}(\text{QM}_k), n}$ as a minor.
\end{Conjecture}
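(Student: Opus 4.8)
Since $\mathcal{M}_k$ is represented over $\mathbb{Q}$ by the full-rank matrix $A_k$, the first move is to reformulate the conjecture as a statement about restrictions only. Assuming $S^k$ has full rank $r := \text{dim}(\text{QM}_k)$ — a prerequisite that is exactly the restriction of Conjecture \ref{fullrank} to $S^k$, and which the examples support — and noting that each positive descendent is a nonzero quasimodular form so that $\mathcal{M}_k | S^k$ has no loops, any contraction strictly drops the rank. Hence a minor of rank $r$ can only be obtained by deletions, i.e. it must itself be a restriction. The plan is therefore to produce, for every $k \geq 14$, a subset $T \subseteq S^k$ with $|T| > r$ such that every $r$-element subset of $T$ is a basis of $\text{QM}_k$, so that $\mathcal{M}_k | T = U_{r,|T|}$. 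The minimal target is $|T| = r+1$, which amounts to a single spanning circuit: a basis $B \subseteq S^k$ together with one further positive descendent $d$ whose coordinate vector relative to $B$ has all entries nonzero. Establishing even this weakest instance for each $k$ already proves the conjecture, so I would first try to reduce everything to a non-vanishing statement for one explicit linear dependence.

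For the construction I would imitate the three worked examples and restrict to positive descendents of small marked-point count. The dimension constraint $\sum k_i = 2g - 2$ attached to $\langle \tau_{k_1}\cdots\tau_{k_n}\rangle$ ties the number of insertions $n$ to the genus, and thereby controls the depth of the resulting form; grouping the descendents by $n$ should place the matrix $A_k$ built from Theorem \ref{GWH} into an approximately block-triangular form with respect to the depth filtration $\text{QM}_k = \bigoplus_{j \geq 0} M_{k-2j}E_2^j$. In this organization the non-vanishing of a maximal minor reduces to the non-vanishing of a determinant of top-depth ($E_2$-leading) coefficients, which are assembled from the leading shifted power sums $p_{k_i+1}(\lambda)$ and the weighted character sums $z_\lambda^{-1}(\chi^\lambda_\mu)^2$. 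The aim would be to show that the family of $2$- and $3$-pointed positive descendents — which already outnumbers $r$ once $k$ is large — is, after a tweak of the kind appearing in the examples, in general linear position.

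The hard part is the non-vanishing itself, uniformly in $k$: one must rule out the vanishing of infinitely many subdeterminants, and there is no free parameter available to invoke a genericity argument. My hope would be to exploit the definite sign of the weights $z_\lambda^{-1}(\chi^\lambda_\mu)^2$ together with the factorial normalizations to pin down the sign or the $p$-adic valuation of the relevant leading coefficients, forcing each determinant to be nonzero; but controlling all minors simultaneously as $k \to \infty$ is precisely where a full proof is likely to stall, which is why the statement is posed as a conjecture rather than a theorem. As a more realistic intermediate goal I would settle for the spanning-circuit case: fix a convenient basis $B \subseteq S^k$ and prove, via a generating-function or $q$-expansion identity, that the coordinates of one additional positive descendent $d$ relative to $B$ are all nonzero rationals, yielding $U_{r,r+1}$ as a minor and hence the conjecture for that $k$. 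Verifying the full-rank prerequisite for $S^k$ would be handled by the same depth-triangularity bookkeeping that underlies the block structure above.
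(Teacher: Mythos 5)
This statement is a \emph{conjecture} in the paper: the author offers no proof, only the computational evidence in weights $14$, $16$, and $18$ displayed just above it (where explicit subsets of $S^k$ are exhibited whose restricted matroids are $U_{8,10}$, $U_{10,14}$, and $U_{12,16}$). So there is no paper proof to compare against, and your proposal --- as you yourself concede near the end --- is a strategy rather than a proof. Your preliminary reduction is sound and worth stating: since every positive stationary descendent is a nonzero element of $\text{QM}_k$, the matroid $\mathcal{M}_k | S^k$ has no loops, so contracting any element strictly drops the rank, and hence a minor of rank $\text{dim}(\text{QM}_k)$ must be a restriction $\mathcal{M}_k | T$ for some $T \subseteq S^k$ in general linear position. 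You also correctly observe that the conjecture tacitly presupposes that $S^k$ spans $\text{QM}_k$, which is itself open (and is strictly stronger than Conjecture \ref{fullrank}, which only concerns the full ground set $E^k$). Note also that your minimal target $|T| = r+1$ is genuinely weaker than what the paper's data suggests (the exhibited minors have $n - r$ equal to $2$, $4$, $4$), though it would indeed suffice for the conjecture as literally stated.

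The genuine gap is exactly where you locate it: nothing in the proposal establishes the non-vanishing of even one maximal minor for general $k$, let alone of all $\binom{|T|}{r}$ of them for a single well-chosen $T$. The depth-filtration heuristic is not a block-triangularization in any proven sense --- the number of insertions $n$ bounds the depth of $\langle\tau_{k_1}\cdots\tau_{k_n}\rangle$ but does not determine it, so grouping by $n$ does not reduce the determinant to a product of ``leading'' blocks without further argument. Likewise, the positivity of the weights $z_\lambda^{-1}(\chi^\lambda_\mu)^2$ controls the sign of the coefficients of $\langle\tau_{k_1}\cdots\tau_{k_n}\rangle^{\bullet E}_d$, not of its coordinates in the Eisenstein basis, which is what enters the determinants; the paper's own matrix $A_8$ shows these coordinates have mixed signs. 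So the proposal correctly frames the problem and reproduces the reduction implicit in the paper's examples, but it does not advance the statement beyond its status as a conjecture.
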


\section{Appendix}

The calculations used in this paper were implemented using Sage \cite{sagemath}. At the author's website \cite{Website}, there is a text file, descendent.txt, containing the classes and functions used for the calculations. We have also provided a Jupyter notebook (also found at the author's website) that serves as a tutorial on how to use the code. The two main classes in descendent.txt are Stationary\_Descendent(\ ) and Descendent\_Matroid(\ ). 

\subsection{The Stationary\_Descendent(\ ) Class}

When instantiating Stationary\_Descendent(\ ), one specifies the descendent insertions via a list $[k_1, k_2, \ldots, k_n]$. Several methods are available for this object. The .evaluate() method returns the Gromov-Witten invariant $\left<\tau_{k_1}\ldots\tau_{k_n}\right>^{\bullet E}_d$. For example, suppose we wanted to compute $\left<\tau_2^2\right>^{\bullet E}_3$. We would run the following commands

\begin{verbatim}
sage: desc = Stationary_Descendent([2, 2])
sage: desc.evaluate(d=3)
sage: 166577809/11059200
\end{verbatim}

\noindent which shows that $\left<\tau_2^2\right>^{\bullet E}_3 = \frac{166577809}{11059200}$. The method .expand() returns the expansion of the stationary descendent $\left<\tau_{k_1}\ldots\tau_{k_n}\right>$ up to a prescribed order. For example, we have

\begin{verbatim}
sage: desc.expand(order = 3)
sage: 248437/17280*q^3 + 15703/23040*q^2 + 127/69120*q + 49/33177600
\end{verbatim} 

\noindent which means that

\begin{equation*}
\left<\tau_2^2\right> := (q)_\infty\sum_{d \geq 0} \left<\tau_2^2\right>^{\bullet E}_d = \frac{49}{33177600} + \frac{127}{69120}q + \frac{15703}{23040}q^2 + \frac{248437}{17280}q^3 + O(q^4)
\end{equation*}

\noindent The .to\_eisenstein\_basis(\ ) method returns the expansion of $\left<\tau_{k_1}\ldots\tau_{k_n}\right>$ in the Eisenstein basis. Specifically, the output is a dictionary whose keys are tuples corresponding to Eisenstein monomials, and whose values are the corresponding coefficients. For example,

\begin{verbatim}
sage: desc.to_eisenstein_basis()
sage: {(6, 2): 1/12, (4, 4): 73/112, (4, 2, 2): -3/4, (2, 2, 2, 2): -15/4}
\end{verbatim}

\noindent means that

\begin{equation*}
\left<\tau_2^2\right> = \frac{1}{12}E_6E_2 + \frac{73}{112}E_4^2  - \frac{3}{4}E_4E_2^2 - \frac{15}{4}E_2^4
\end{equation*}

\subsection{The Descendent\_Matroid(\ ) Class}\label{descendentmatroidclass}

When instantiating the Descendent\_Matroid(\ ) class, the user specifies a weight $k$. One then has access to various methods to make computations with the descendent matroid $\mathcal{M}_k$. For example, we can compute the matrix representation of $\mathcal{M}_k$ with the .matrix\_repr(\ ) method.

\begin{verbatim}
sage: m4 = Descendent_Matroid(4)
sage: m6 = Descendent_Matroid(6)
sage: m8 = Descendent_Matroid(8)
sage: for matroid in [m4, m6, m8]:
          	print(matroid.matrix_repr())
	 	         print("")
\end{verbatim}

\begin{verbatim}
sage: [1/12  5/6]
      [ 1/2   -1]

[1/360 7/120 7/180  7/12]
[ 1/12   1/4   2/3 -15/2]
[  1/6  -3/2  -8/3     3]

[  1/360    1/36  13/180    1/12   -7/15   7/180   -35/3]
[19/2016 115/504   25/63  73/112   85/24    25/9  325/12]
[   1/24    -1/3    -2/3    -3/4      -6   -38/3      75]
[   1/24    -5/6    -8/3   -15/4    15/2    40/3     -15]
\end{verbatim}

\noindent In order to determine which column corresponds to which stationary descendent, we can use the .groundset(\ ) method. For example, for $\mathcal{M}_8$, we have

\begin{verbatim}
sage: m8.groundset()
sage: [[6], [4, 0], [3, 1], [2, 2], [2, 0, 0], [1, 1, 0], [0, 0, 0, 0]]
\end{verbatim} 

\noindent which means that the first column corresponds to $\left<\tau_6\right>$, the second column corresponds to $\left<\tau_4\tau_0\right>$, and so on.

There is a Sage library for working with matroids. We have baked in this functionality into the Descendent\_Matroid(\ ) class via the method .matroid\_repr(\ ). For example,

\begin{verbatim}
sage: mat8 = m8.matroid_repr(); mat8
sage: Linear matroid of rank 4 on 7 elements represented over the Rational Field
\end{verbatim}

\noindent We now have access to all of the matroid functionality of the Matroids Sage library. For example,

\begin{verbatim}
sage: mat8.rank()
sage: 4
sage: mat8.tutte_polynomial()
sage: x^4 + 3*x^3 + y^3 + 6*x^2 + x*y + 4*y^2 + 9*x + 9*y
sage: mat8.bases_count()
sage: 34
\end{verbatim}

\noindent Notice that the last output above provides one of the calculations needed for the proof of  Proposition \ref{weighteight}. 

If we want see what the bases are for any given matroid $\mathcal{M}_k$, we can use the .bases(\ ) method. It returns an iterator object that contains each basis in the matroid. For example, in $\mathcal{M}_6$, 

\begin{verbatim}
sage: for basis in m6.bases():
          	print(i)
sage: [[4], [2, 0], [1, 1]]
      [[4], [1, 1], [0, 0, 0]]
      [[2, 0], [1, 1], [0, 0, 0]]
      [[4], [2, 0], [0, 0, 0]]
\end{verbatim}

\noindent This means that the four bases of $\mathcal{M}_6$ are $(\left<\tau_4\right>, \left<\tau_2\tau_0\right>, \left<\tau_1^2\right>), (\left<\tau_4\right>, \left<\tau_1^2\right>, \left<\tau_0^3\right>), $ \\
$ (\left<\tau_2\tau_0\right>, \left<\tau_1^2\right>, \left<\tau_0^3\right>)$, and $(\left<\tau_4\right>, \left<\tau_2\tau_0\right>, \left<\tau_0^3\right>)$. 

\noindent We can also work with the restricted matroid $\mathcal{M}_k | S^k$. When we call the method .matroid\_repr(\ ), or the method .matrix\_repr(\ ), we have the option to set the parameter `positive' to `True', which restricts the matroid to $S^k$. For example,

\begin{verbatim}
sage: m12 = Descendent_Matroid(12)
sage: mat12 = m12.matroid_repr(positive = True); mat12
sage: Linear matroid of rank 7 on 9 elements represented over the Rational Field
sage: mat12.bases_count() == binomial(9, 7)
sage: True
\end{verbatim}

\noindent The .basis(\ ) method has an optional `positive' parameter, which, when set equal to True, returns an iterator containing the bases of the restricted matroid $\mathcal{M}_k | S^k$. \\

\subsection{The Descendent\_Basis(\ ) Class}

We also wrote a Descendent\_Basis(\ ) class that helps with computing the discriminant modular form in terms of the descendent bases of weight $k$ (see Corollary \ref{nonlineardiscriminant} and Table \ref{descendentbases}). When instantiating, the user specifies a weight $k$ and a type parameter (called `typ') ranging from $1$ to $8$. The type parameter specifies which of the eight triples of generators to work with. In Table \ref{descendentbases}, each row corresponds to one of these types. The first row of Table \ref{descendentbases} corresponds to setting the type parameter to $1$, the second row corresponds to setting the type parameter to $2$, and so on. For example, if we want to work with the triple $(\left<\tau_0\right>, \left<\tau_0^2\right>, \left<\tau_0^3\right>)$ i.e. type 1, in weight $4$, we first instantiate the basis,

\begin{verbatim}
sage: basis = Descendent_Basis(weight = 4, typ = 1); basis
sage: The descendent basis in weight 4 and type 1
\end{verbatim}

\noindent There is a method .expand(), which takes a parameter `coef'. The parameter `coef' is expected to be a list of $\text{dim}(\text{QM}_k)$ coefficients, say $[a_1, \ldots, a_{\text{dim}(\text{QM}_k)}]$. It then returns the quasimodular form of weight $k$, written in the descendent basis, whose first $\text{dim}(\text{QM}_k)$ coefficients are $a_1, \ldots, a_{\text{dim}(\text{QM}_k)}$. For example, suppose we want to know the expansion of $E_4$ in type 1. Since $\text{dim}(\text{QM}_4) = 2$, we need to specify the first $2$ coefficients of $E_4$, which happen to be $\frac{1}{240}$ and $1$. We have

\begin{verbatim}
sage: basis.expand([1/240, 1])
sage: {((0, 0),): 6/5, ((0,), (0,)): 6/5}
\end{verbatim}

\noindent We interpret the outputted dictionary as saying

\begin{equation*}
E_4 = \frac{6}{5}\left<\tau_0^2\right> + \frac{6}{5}\left<\tau_0\right>^2
\end{equation*}

\noindent Consider the case of weight $12$ and type 1. Suppose we want to find the discriminant modular form $\Delta$ in this descendent basis. Since $\text{dim}(\text{QM}_{12}) = 7$, we need to specify the first 7 coefficients of $\Delta$, which happen to be $0, 1, -24, 252, -1472, 4830, -6048$. We have

\begin{verbatim}
sage: basis = Descendent_Basis(weight = 12, typ = 1)
sage: basis.expand([0, 1, -24, 252, -1472, 4830, -6048]) 
sage: {((0, 0, 0), (0, 0, 0)): -432,
 ((0, 0, 0), (0, 0), (0,)): -7776,
 ((0, 0, 0), (0,), (0,), (0,)): -5184,
 ((0, 0), (0, 0), (0, 0)): 13824,
 ((0, 0), (0, 0), (0,), (0,)): 6480,
 ((0, 0), (0,), (0,), (0,), (0,)): -5184,
 ((0,), (0,), (0,), (0,), (0,), (0,)): -1728}
\end{verbatim}

\noindent The output is interpreted as

\begin{align*}
\Delta = & -432\left<\tau_0^3\right>^2 - 7776\left<\tau_0^3\right>\left<\tau_0^2\right>\left<\tau_0\right>  - 5184\left<\tau_0^3\right>\left<\tau_0\right>^3 \\
& + 13824\left<\tau_0^2\right>^3 + 6480\left<\tau_0^2\right>^2\left<\tau_0\right>^2 - 5184\left<\tau_0^2\right>\left<\tau_0\right>^4 - 1728\left<\tau_0\right>^6
\end{align*}

\noindent which corresponds to the first row of Table \ref{descendentbases}. The remaining rows are generated by varying the type parameter. 

\pagebreak

\subsection{Tables}

In this section, we collect various tables of expansions of $\Delta$ in terms of stationary descendents. In Table \ref{descendentbases}, we express $\Delta$ in the $8$ different descendent bases of weight $12$ (see Corollary \ref{nonlineardiscriminant}). In Tables \ref{partone}, \ref{parttwo}, and \ref{partthree}, we express $\Delta$ as a linear combination of stationary descendents of weight $12$. For Tables \ref{partone}, \ref{parttwo}, and \ref{partthree}, the first column corresponds to the $36$ bases of $\mathcal{M}_{12} | S^{12}$. The (ordered) groundset of this matroid is

\begin{equation*}
E^{12} | S^{12} = \left\{ \left<\tau_{10}\right>, \left<\tau_7\tau_1\right>, \left<\tau_6\tau_2\right>, \left<\tau_5\tau_3\right>, \left<\tau_4\tau_4\right>, \left<\tau_4\tau_1\tau_1\right>, \left<\tau_3\tau_2\tau_1\right>, \left<\tau_2\tau_2\tau_2\right>, \left<\tau_1\tau_1\tau_1\tau_1\right>  \right\}
\end{equation*} 

\noindent Each entry in the first column of Tables \ref{partone}, \ref{parttwo}, and \ref{partthree} is a string of seven distinct integers ranging from $1$ to $9$. Each integer keeps track of a stationary descendent in $E^{12} | S^{12}$. For example, the entry $(2456789)$ corresponds to the basis 

\begin{equation*}
 (\left<\tau_7\tau_1\right>, \left<\tau_5\tau_3\right>, \left<\tau_4\tau_4\right>, \left<\tau_4\tau_1\tau_1\right>, \left<\tau_3\tau_2\tau_1\right>, \left<\tau_2\tau_2\tau_2\right>, \left<\tau_1\tau_1\tau_1\tau_1\right>)
\end{equation*}

\noindent Here is some sample code to generate the first row of Table \ref{partone}:

\begin{verbatim}
sage: m12 = Descendent_Matroid(12)
sage: basis = list(m12.bases(positive = True))[0]
sage: target_vector = vector([0, 1, -24, 252, -1472, 4830, -6048])
sage: M = []
sage: for basis_vector in basis:
          f = Stationary_Descendent(basis_vector).expand(6)
          column = [i for i in f]
          M.append(column)
sage: M = Matrix(M).transpose()
sage: solution_vector = M.solve_right(target_vector)
sage: print(basis)
sage: [[10], [7, 1], [6, 2], [5, 3], [4, 4], [4, 1, 1], [3, 2, 1]]
sage: print(solution_vector)
sage: (-23011579448/8209, 90651811166/8209, -84309768312/8209, -83720227146/8209, 
      93735530480/8209, -944663370/8209, 550191978/8209)
\end{verbatim}

\noindent The results of the print statements are interpreted as

\begin{align*}
\Delta = & -\frac{23011579448}{8209}\left<\tau_{10}\right> + \frac{90651811166}{8209}\left<\tau_7\tau_1\right> - \frac{84309768312}{8209}\left<\tau_6\tau_2\right> - \frac{83720227146}{8209}\left<\tau_5\tau_3\right> \\
& + \frac{93735530480}{8209}\left<\tau_4\tau_4\right> - \frac{944663370}{8209}\left<\tau_4\tau_1\tau_1\right> + \frac{550191978}{8209}\left<\tau_3\tau_2\tau_1\right>
\end{align*}

\noindent which, after multiplying through by $8209$, is precisely the statement in the first row of Table \ref{partone}.

\begin{center}
\begin{table}[h]
\begin{adjustwidth}{-2.0cm}{}
\begin{tabular}{|c|l|}

\hline 

$\text{QM}$ in Descendent Generators & Discriminant Modular Form in Descendent Basis \\

\hline

$\mathbb{Q}\left[  \left<\tau_0\right>, \left<\tau_0^2\right>, \left<\tau_0^3\right> \right]$ & $\begin{aligned} \Delta = \ & 13824\left<\tau_0^2\right>^3 + 6480\left<\tau_0^2\right>^2\left<\tau_0\right>^2 \\ & - \left( 432\left<\tau_0^3\right>^2 + 7776\left<\tau_0^3\right>\left<\tau_0^2\right>\left<\tau_0\right> + 5184\left<\tau_0^3\right>\left<\tau_0\right>^3  + 5184\left<\tau_0^2\right>\left<\tau_0\right>^4 + 1728\left<\tau_0\right>^6\right) \end{aligned}$ \\

\hline

$\mathbb{Q}\left[  \left<\tau_0\right>, \left<\tau_0^2\right>, \left<\tau_1^2\right> \right]$ & $\begin{aligned} \Delta = \ & 155520\left<\tau_1^2\right>\left<\tau_0^2\right>\left<\tau_0\right> + 13824\left<\tau_0^2\right>^3 + 331776\left<\tau_0^2\right>\left<\tau_0\right>^4 \\ & - \left( 97200\left<\tau_1^2\right>^2 + 362880\left<\tau_1^2\right>\left<\tau_0\right>^3 + 20736\left<\tau_0^2\right>^2\left<\tau_0\right>^2 + 324864\left<\tau_0\right>^6  \right)   \end{aligned}$ \\

\hline 

$\mathbb{Q}\left[  \left<\tau_0\right>, \left<\tau_0^2\right>, \left<\tau_2\tau_0\right> \right]$ & $\begin{aligned} \Delta = \ & 25920\left<\tau_2\tau_0\right>\left<\tau_0^2\right>\left<\tau_0\right> + 13824\left<\tau_0^2\right>^3 + 37584\left<\tau_0^2\right>^2\left<\tau_0\right>^2 + 72576\left<\tau_0^2\right>\left<\tau_0\right>^4 \\
& - \left(43200\left<\tau_2\tau_0\right>^2 + 103680\left<\tau_2\tau_0\right>\left<\tau_0\right>^3 + 48384\left<\tau_0\right>^6   \right)   \end{aligned}$ \\

\hline

$\mathbb{Q}\left[  \left<\tau_0\right>, \left<\tau_0^2\right>, \left<\tau_4\right> \right]$ & $\begin{aligned} \Delta = \ & 3810240\left<\tau_4\right>\left<\tau_0^2\right>\left<\tau_0\right> + 10160640\left<\tau_4\right>\left<\tau_0\right>^3 + 13824\left<\tau_0^2\right>^3 \\ & - \left( 19051200\left<\tau_4\right>^2 + 149040\left<\tau_0^2\right>^2\left<\tau_0\right>^2 + 974592\left<\tau_0^2\right>\left<\tau_0\right>^4 + 1340928\left<\tau_0\right>^6  \right)   \end{aligned}$ \\

\hline

$\mathbb{Q}\left[  \left<\tau_0\right>, \left<\tau_2\right>, \left<\tau_0^3\right> \right]$ & $\begin{aligned} \Delta = \ & 41472\left<\tau_0^3\right>\left<\tau_0\right>^3 + 13824000\left<\tau_2\right>^3 + 14100480\left<\tau_2\right>\left<\tau_0\right>^4 \\ & - \left( 432\left<\tau_0^3\right>^2 + 77760\left<\tau_0^3\right>\left<\tau_2\right>\left<\tau_0\right> + 24235200\left<\tau_2\right>^2\left<\tau_0\right>^2 + 2723328\left<\tau_0\right>^6  \right)   \end{aligned}$ \\

\hline 

$\mathbb{Q}\left[  \left<\tau_0\right>, \left<\tau_2\right>, \left<\tau_1^2\right> \right]$ & $\begin{aligned} \Delta = \ & 1555200\left<\tau_1^2\right>\left<\tau_2\right>\left<\tau_0\right> + 13824000\left<\tau_2\right>^3 + 20736000\left<\tau_2\right>\left<\tau_0\right>^4 \\ & - \left( 97200\left<\tau_1^2\right>^2 + 1296000\left<\tau_1^2\right>\left<\tau_0\right>^3 + 26956800\left<\tau_2\right>^2\left<\tau_0\right>^2 + 604800\left<\tau_0\right>^6  \right)   \end{aligned}$ \\

\hline 

$\mathbb{Q}\left[  \left<\tau_0\right>, \left<\tau_2\right>, \left<\tau_2\tau_0\right> \right]$ & $\begin{aligned} \Delta = \ & 259200\left<\tau_2\tau_0\right>\left<\tau_2\right>\left<\tau_0\right> + 13824000\left<\tau_2\right>^3 + 11145600\left<\tau_2\right>\left<\tau_0\right>^4  \\ & - \left( 43200\left<\tau_2\tau_0\right>^2 + 259200\left<\tau_2\tau_0\right>\left<\tau_0\right>^3 + 21124800\left<\tau_2\right>^2\left<\tau_0\right>^2 + 2116800\left<\tau_0\right>^6  \right)   \end{aligned}$ \\

\hline 

$\mathbb{Q}\left[  \left<\tau_0\right>, \left<\tau_2\right>, \left<\tau_4\right> \right]$ & $\begin{aligned} \Delta = \ & 38102400\left<\tau_4\right>\left<\tau_2\right>\left<\tau_0\right> + 13824000\left<\tau_2\right>^3 + 23068800\left<\tau_2\right>\left<\tau_0\right>^4 \\ & - \left( 19051200\left<\tau_4\right>^2 + 12700800\left<\tau_4\right>\left<\tau_0\right>^3 + 39787200\left<\tau_2\right>^2\left<\tau_0\right>^2 + 3844800\left<\tau_0\right>^6  \right)   \end{aligned}$ \\

\hline

\end{tabular}
\end{adjustwidth}
\caption{This table collects the representations of the discriminant modular form in the $8$ descendent bases of weight $12$}
\label{descendentbases}
\end{table}
\end{center}

\pagebreak

\begin{table}[h]
\begin{adjustwidth}{-1.8cm}{}
\begin{tabular}{|c|c|c|}
\hline
Bases & Stationary Descendent Sum & Scaled $\Delta$ \\

\hline

(1234567) & $\begin{array}{c} -23011579448\left<\tau_{10}\right> + 90651811166\left<\tau_7\tau_1\right> -84309768312\left<\tau_6\tau_2\right> \\ -83720227146\left<\tau_5\tau_3\right> + 93735530480\left<\tau_4\tau_4\right> -944663370\left<\tau_4\tau_1\tau_1\right> \\ + 550191978\left<\tau_3\tau_2\tau_1\right>  \end{array}$ & $8209 \Delta$  \\

\hline

(1234578) & $\begin{array}{c} -1460661202000\left<\tau_{10}\right> -113056294400\left<\tau_7\tau_1\right> + 559690439952\left<\tau_6\tau_2\right> \\ -550617228000\left<\tau_5\tau_3\right> + 210717955840\left<\tau_4\tau_4\right>  + 1881300288\left<\tau_3\tau_2\tau_1\right> \\ -1511461392\left<\tau_2\tau_2\tau_2\right> \end{array}$ & $24447\Delta$ \\

\hline

(1235678) & $\begin{array}{c} 4061310267014000\left<\tau_{10}\right> + 2200026202847200\left<\tau_7\tau_1\right> -3456039328583856\left<\tau_6\tau_2\right> \\ + 1258635125332480\left<\tau_4\tau_4\right> -19271602980000\left<\tau_4\tau_1\tau_1\right> + 5388671207136\left<\tau_3\tau_2\tau_1\right> \\ + 4688332720176\left<\tau_2\tau_2\tau_2\right> \end{array}$ & $91636659\Delta$ \\

\hline 

(1345678) & $\begin{array}{c} -2143830146000400\left<\tau_{10}\right> + 654283897239504\left<\tau_6\tau_2\right> -942868372648800\left<\tau_5\tau_3\right> \\ + 471586339879680\left<\tau_4\tau_4\right> -1695844416000\left<\tau_4\tau_1\tau_1\right>  + 3695696563776\left<\tau_3\tau_2\tau_1\right> \\ -2175643467984\left<\tau_2\tau_2\tau_2\right> \end{array}$ & $49926419\Delta$ \\

\hline

(2345678) & $\begin{array}{c} 26235683604900\left<\tau_7\tau_1\right> -26432758699872\left<\tau_6\tau_2\right> -21300578323500\left<\tau_5\tau_3\right> \\ + 25663162697760\left<\tau_4\tau_4\right> -268128367500\left<\tau_4\tau_1\tau_1\right> + 147751230732\left<\tau_3\tau_2\tau_1\right> \\ + 6758645712\left<\tau_2\tau_2\tau_2\right> \end{array}$ & $2220683\Delta$ \\

\hline

(1245678) & $\begin{array}{c} -10587911748128\left<\tau_{10}\right> + 3207274006076\left<\tau_7\tau_1\right> -7260586824756\left<\tau_5\tau_3\right> \\ + 5466346730720\left<\tau_4\tau_4\right> -41153708820\left<\tau_4\tau_1\tau_1\right> + 36314615316\left<\tau_3\tau_2\tau_1\right> \\ -9918796272\left<\tau_2\tau_2\tau_2\right>  \end{array}$ & $518051\Delta$ \\

\hline

(1234678) & $\begin{array}{c} -366983226577968\left<\tau_{10}\right> -82527609478944\left<\tau_7\tau_1\right> + 195148578286704\left<\tau_6\tau_2\right> \\ -94397634399936\left<\tau_5\tau_3\right> +  553134634080\left<\tau_4\tau_1\tau_1\right> + 167863551072\left<\tau_3\tau_2\tau_1\right> \\ -393689228016\left<\tau_2\tau_2\tau_2\right>  \end{array}$ & $1561025\Delta$ \\

\hline

(1234568) & $\begin{array}{c} 4024462094224\left<\tau_{10}\right> + 1231898854592\left<\tau_7\tau_1\right> -2469393841488\left<\tau_6\tau_2\right> \\ + 769810172448\left<\tau_5\tau_3\right> + 319740097280\left<\tau_4\tau_4\right> -9406501440\left<\tau_4\tau_1\tau_1\right> \\ + 4401535824\left<\tau_2\tau_2\tau_2\right>  \end{array}$ & $10549\Delta$ \\

\hline

(1234589) & $\begin{array}{c} -205699794540240\left<\tau_{10}\right> +  212695958529600\left<\tau_7\tau_1\right> + 154182893788560\left<\tau_6\tau_2\right> \\ -79777455216480\left<\tau_5\tau_3\right> -43749966086400\left<\tau_4\tau_4\right> -86195237520\left<\tau_2\tau_2\tau_2\right> \\ + 15677502400\left<\tau_1\tau_1\tau_1\tau_2\right>  \end{array}$ & $8557351\Delta$ \\

\hline

(1235689) & $\begin{array}{c} 605422142631306000\left<\tau_{10}\right> + 974907030645589440\left<\tau_7\tau_1\right> -291379733109785040\left<\tau_6\tau_2\right> \\ -30403345965619200\left<\tau_4\tau_4\right> -2792210932576800\left<\tau_4\tau_1\tau_1\right> + 1059653000469840\left<\tau_2\tau_2\tau_2\right> \\ + 44905593392800\left<\tau_1\tau_1\tau_1\tau_1\right>  \end{array}$ & $27642454477\Delta$ \\

\hline

(1345689) & $\begin{array}{c} -221134586618877360\left<\tau_{10}\right> + 142554558857496240\left<\tau_6\tau_2\right> -52227162356013720\left<\tau_5\tau_3\right> \\ -24298943733057600\left<\tau_4\tau_4\right> + 398804922243000\left<\tau_4\tau_1\tau_1\right> -207776378545680\left<\tau_2\tau_2\tau_2\right> \\ + 3849683920600\left<\tau_1\tau_1\tau_1\tau_1\right>  \end{array}$ & $1654054369\Delta$ \\

\hline

(2345689) & $\begin{array}{c} 5412384987175320\left<\tau_7\tau_1\right> +  549096100019880\left<\tau_6\tau_2\right> -793822739464125\left<\tau_5\tau_3\right> \\ -538119984372600\left<\tau_4\tau_4\right> -9439894417275\left<\tau_4\tau_1\tau_1\right> + 2724787540020\left<\tau_2\tau_2\tau_2\right> \\ + 307815064025\left<\tau_1\tau_1\tau_1\tau_1\right> \end{array}$ & $178603106\Delta$ \\

\hline

(1245689) & $\begin{array}{c} 7478165933604080\left<\tau_{10}\right> + 47518186285832080\left<\tau_7\tau_1\right> -5203209519817590\left<\tau_5\tau_3\right> \\ -3902715832691600\left<\tau_4\tau_4\right> -96364308617850\left<\tau_4\tau_1\tau_1\right> +  30948776806920\left<\tau_2\tau_2\tau_2\right> \\ + 2572285251550\left<\tau_1\tau_1\tau_1\tau_1\right> \end{array}$ & $1512115407\Delta$ \\

\hline

(1234689) & $\begin{array}{c} 30780463106112720\left<\tau_{10}\right> + 34018521226280640\left<\tau_7\tau_1\right> -16391406497304720\left<\tau_6\tau_2\right> \\ + 2280250947421440\left<\tau_5\tau_3\right> -114843660976800\left<\tau_4\tau_1\tau_1\right> +  46047225680400\left<\tau_2\tau_2\tau_2\right> \\ + 1398862925600\left<\tau_1\tau_1\tau_1\tau_1\right> \end{array}$ & $892342649\Delta$ \\

\hline

\end{tabular}
\end{adjustwidth}
\caption{Part $1$ of $3$}
\label{partone}
\end{table}

\pagebreak

\begin{table}[h]
\begin{adjustwidth}{-2.1cm}{}
\begin{tabular}{|c|c|c|}   
\hline
Bases & Stationary Descendent Sum & Scaled $\Delta$ \\
  
\hline

(1236789) & $\begin{array}{c} 880302160231494000\left<\tau_{10}\right> + 1286369729009184960\left<\tau_7\tau_1\right> -469055398297145520\left<\tau_6\tau_2\right> \\ -4076305720351200\left<\tau_4\tau_1\tau_1\right> + 162875067672960\left<\tau_3\tau_2\tau_1\right> + 1467620115823920\left<\tau_2\tau_2\tau_2\right> \\ + 56189068095200\left<\tau_1\tau_1\tau_1\tau_1\right> \end{array}$ & $37357962683\Delta$ \\

\hline

(1346789) & $\begin{array}{c} -8462645983875042960\left<\tau_{10}\right> + 4498485674608848720\left<\tau_6\tau_2\right> -2572739458018369920\left<\tau_5\tau_3\right> \\ + 7947823801428000\left<\tau_4\tau_1\tau_1\right> + 4859788746611520\left<\tau_3\tau_2\tau_1\right> -8163567407678160\left<\tau_2\tau_2\tau_2\right> \\ + 98247154141600\left<\tau_1\tau_1\tau_1\tau_1\right>\end{array}$ & $107865386539\Delta$ \\
 
\hline

(2346789) & $\begin{array}{c} 301935421145820\left<\tau_7\tau_1\right> -261358755840\left<\tau_6\tau_2\right> -62815909821000\left<\tau_5\tau_3\right> \\ -762732650400\left<\tau_4\tau_1\tau_1\right> + 156886292820\left<\tau_3\tau_2\tau_1\right> + 145156976640\left<\tau_2\tau_2\tau_2\right> \\ + 15587440900\left<\tau_1\tau_1\tau_1\tau_1\right>\end{array}$ & $11402261\Delta$ \\

\hline

(1246789) & $\begin{array}{c} -1831340802170880\left<\tau_{10}\right> + 1124621418652212180\left<\tau_7\tau_1\right> -234527699148572760\left<\tau_5\tau_3\right> \\ -2839236826725600\left<\tau_4\tau_1\tau_1\right> + 585407374903740\left<\tau_3\tau_2\tau_1\right> + 538900800122880\left<\tau_2\tau_2\tau_2\right> \\ + 58079934013900\left<\tau_1\tau_1\tau_1\tau_1\right> \end{array}$ & $42493440431\Delta$ \\

\hline

(1456789) & $\begin{array}{c}-6220670014586536080\left<\tau_{10}\right> -3874730354453327910\left<\tau_5\tau_3\right> + 3213204053292034800\left<\tau_4\tau_4\right> \\ -19431171313622250\left<\tau_4\tau_1\tau_1\right> + 20364936979642320\left<\tau_3\tau_2\tau_1\right> -6733822630660920\left<\tau_2\tau_2\tau_2\right> \\ -97363675184450\left<\tau_1\tau_1\tau_1\tau_1\right> \end{array}$ & $233283588587\Delta$ \\

\hline

(2456789) & $\begin{array}{c} 1436360011284660\left<\tau_7\tau_1\right> -298080526863000\left<\tau_5\tau_3\right> -1208167833600\left<\tau_4\tau_4\right> \\ -3618950815200\left<\tau_4\tau_1\tau_1\right> + 740021698140\left<\tau_3\tau_2\tau_1\right> + 690812928000\left<\tau_2\tau_2\tau_2\right> \\ + 74215966700\left<\tau_1\tau_1\tau_1\tau_1\right> \end{array}$ & $54184663\Delta$ \\

\hline

(3456789) & $\begin{array}{c} -2768257476294072\left<\tau_6\tau_2\right> -1660573736306625\left<\tau_5\tau_3\right> + 2689970115662760\left<\tau_4\tau_4\right> \\ -21157920186375\left<\tau_4\tau_1\tau_1\right> + 14058142823832\left<\tau_3\tau_2\tau_1\right> -613637118588\left<\tau_2\tau_2\tau_2\right> \\ -141967984875\left<\tau_1\tau_1\tau_1\tau_1\right> \end{array}$ & $128918288\Delta$ \\

\hline 

(1356789) & $\begin{array}{c} 41793319795365138000\left<\tau_{10}\right> -43396979969877272592\left<\tau_6\tau_2\right> + 20581915664146959360\left<\tau_4\tau_4\right> \\ -201137590889172000\left<\tau_4\tau_1\tau_1\right> + 83563459769621952\left<\tau_3\tau_2\tau_1\right> + 35621145778898832\left<\tau_2\tau_2\tau_2\right> \\ -1571447287748000\left<\tau_1\tau_1\tau_1\tau_1\right> \end{array}$  & $453699429793\Delta$ \\

\hline

(2356789) & $\begin{array}{c} 2922609775899660\left<\tau_7\tau_1\right> + 1011089147119296\left<\tau_6\tau_2\right> -984953465993280\left<\tau_4\tau_4\right> \\ + 364200895800\left<\tau_4\tau_1\tau_1\right> -3628903951836\left<\tau_3\tau_2\tau_1\right> + 1629747391584\left<\tau_2\tau_2\tau_2\right> \\ + 202862650700\left<\tau_1\tau_1\tau_1\tau_1\right> \end{array}$ & $63164729\Delta$ \\

\hline

(1256789) & $\begin{array}{c} 6024406168252472000\left<\tau_{10}\right> + 18082074987448863580\left<\tau_7\tau_1\right> -3127035988647636800\left<\tau_4\tau_4\right> \\ -26740201199682600\left<\tau_4\tau_1\tau_1\right> -10406419039635180\left<\tau_3\tau_2\tau_1\right> + 15217887008292000\left<\tau_2\tau_2\tau_2\right> \\ + 1028583133507100\left<\tau_1\tau_1\tau_1\tau_1\right> \end{array}$ &  $456197441969\Delta$ \\

\hline

(1235789) & $\begin{array}{c} 57867475666000\left<\tau_{10}\right> + 2234862120990800\left<\tau_7\tau_1\right> + 713072031991536\left<\tau_6\tau_2\right> \\ -724676572506880\left<\tau_4\tau_4\right> -2659248507216\left<\tau_3\tau_2\tau_1\right> + 1295557142544\left<\tau_2\tau_2\tau_2\right> \\ + 152949230000\left<\tau_1\tau_1\tau_1\tau_1\right> \end{array}$ & $48929021\Delta$ \\

\hline

(1345789) & $\begin{array}{c} -3042871630003920\left<\tau_{10}\right> + 1243594964071824\left<\tau_6\tau_2\right> -1149357662223840\left<\tau_5\tau_3\right> \\ + 363329088065280\left<\tau_4\tau_4\right> + 3646216431936\left<\tau_3\tau_2\tau_1\right> -3018216215184\left<\tau_2\tau_2\tau_2\right> \\ + 16150899200\left<\tau_1\tau_1\tau_1\tau_1\right> \end{array}$ & $56197367\Delta$ \\

\hline

(2345789) & $\begin{array}{c} 186189697640100\left<\tau_7\tau_1\right> + 61377405825792\left<\tau_6\tau_2\right> -1821004479000\left<\tau_5\tau_3\right> \\ -59798239791360\left<\tau_4\tau_4\right> -215769015252\left<\tau_3\tau_2\tau_1\right> + 103152835968\left<\tau_2\tau_2\tau_2\right> \\ + 12768017500\left<\tau_1\tau_1\tau_1\tau_1\right> \end{array}$ & $4165387\Delta$ \\

\hline

(1245789) & $\begin{array}{c} 2089754531687680\left<\tau_{10}\right> + 2590822841816300\left<\tau_7\tau_1\right> + 764005748562360\left<\tau_5\tau_3\right> \\ -1081614029228800\left<\tau_4\tau_4\right> -5506531921020\left<\tau_3\tau_2\tau_1\right> + 3508189864320\left<\tau_2\tau_2\tau_2\right> \\ + 166574535700\left<\tau_1\tau_1\tau_1\tau_1\right> \end{array}$ & $19366513\Delta$ \\

\hline

\end{tabular}
\end{adjustwidth}
\caption{Part 2 of 3}   
\label{parttwo}
\end{table}

\pagebreak
 
\begin{table}[h]
\begin{adjustwidth}{-2.1cm}{}\label{a}
\begin{tabular}{|c|c|c|}
\hline
Bases & Stationary Descendent Sum & Scaled $\Delta$ \\

\hline

 (1234789) & $\begin{array}{c} -133611692033820\left<\tau_{10}\right> + 49673898758925\left<\tau_7\tau_1\right> + 70980920668140\left<\tau_6\tau_2\right> \\ -50953821504390\left<\tau_5\tau_3\right> + 102538983015\left<\tau_3\tau_2\tau_1\right> -105008740620\left<\tau_2\tau_2\tau_2\right> \\ + 4115585075\left<\tau_1\tau_1\tau_1\tau_1\right> \end{array}$ & $3578903\Delta$ \\

\hline

(1234579) & $\begin{array}{c} -24584759239040\left<\tau_{10}\right> + 44015653138100\left<\tau_7\tau_1\right> + 24557329050240\left<\tau_6\tau_2\right> \\ -9716678569080\left<\tau_5\tau_3\right> -11200932332800\left<\tau_4\tau_4\right> -21548809380\left<\tau_3\tau_2\tau_1\right> \\ + 3148877900\left<\tau_1\tau_1\tau_1\tau_1\right> \end{array}$ & $1438751\Delta$ \\

\hline

(1235679) & $\begin{array}{c} -67974047457316000\left<\tau_{10}\right> + 103895008521788260\left<\tau_7\tau_1\right> + 106525209058044000\left<\tau_6\tau_2\right> \\  -68488938738449600\left<\tau_4\tau_4\right> + 340083749917800\left<\tau_4\tau_1\tau_1\right> -264913250117460\left<\tau_3\tau_2\tau_1\right> \\ + 9767359833700\left<\tau_1\tau_1\tau_1\tau_1\right> \end{array}$ & $1507512793\Delta$ \\

\hline

(1345679) & $\begin{array}{c} 19305023750778480\left<\tau_{10}\right> -94273516829252880\left<\tau_6\tau_2\right> -44526432223623540\left<\tau_5\tau_3\right> \\ + 81635674076781600\left<\tau_4\tau_4\right> -660234797071500\left<\tau_4\tau_1\tau_1\right> + 415552757091360\left<\tau_3\tau_2\tau_1\right> \\ -4532590558300\left<\tau_1\tau_1\tau_1\tau_1\right> \end{array}$ & $3666371323\Delta$ \\

\hline

(2345679) & $\begin{array}{c} 18173099281260\left<\tau_7\tau_1\right> -39429476352000\left<\tau_6\tau_2\right> -27423633993000\left<\tau_5\tau_3\right> \\ + 38299109990400\left<\tau_4\tau_4\right> -347148967200\left<\tau_4\tau_1\tau_1\right> + 209599041540\left<\tau_3\tau_2\tau_1\right> \\ -1083116300\left<\tau_1\tau_1\tau_1\tau_1\right> \end{array}$ & $2521793\Delta$ \\

\hline

(1245679) & $\begin{array}{c} -3490447454208000\left<\tau_{10}\right> + 7856126402437740\left<\tau_7\tau_1\right> -3804471752073000\left<\tau_5\tau_3\right> \\  + 1796336000409600\left<\tau_4\tau_4\right> -30696661312800\left<\tau_4\tau_1\tau_1\right> +  15474388403460\left<\tau_3\tau_2\tau_1\right> \\ + 351290701300\left<\tau_1\tau_1\tau_1\tau_1\right> \end{array}$ & $427257857\Delta$ \\

\hline

(1234679) & $\begin{array}{c} -7119804547215360\left<\tau_{10}\right> + 14286242963436780\left<\tau_7\tau_1\right> + 3772305600860160\left<\tau_6\tau_2\right> \\ -5136670405383720\left<\tau_5\tau_3\right> -29402447373600\left<\tau_4\tau_1\tau_1\right> + 11511806420100\left<\tau_3\tau_2\tau_1\right> \\ + 820185891700\left<\tau_1\tau_1\tau_1\tau_1\right> \end{array}$ & $630253793\Delta$ \\

\hline

(1234569) & $\begin{array}{c} -74218022518640\left<\tau_{10}\right> + 138517585697120\left<\tau_7\tau_1\right> + 61897553613840\left<\tau_6\tau_2\right> \\ -37844750016780\left<\tau_5\tau_3\right> -21927250324000\left<\tau_4\tau_4\right> -107744046900\left<\tau_4\tau_1\tau_1\right> \\ + 9169866300\left<\tau_1\tau_1\tau_1\tau_1\right> \end{array}$ & $5126077\Delta$ \\

\hline

\end{tabular}
\end{adjustwidth}
\caption{Part 3 of 3}
\label{partthree}
\end{table}

\clearpage   
   
\bibliographystyle{alpha}
\bibliography{bibliography}

\end{document}